\documentclass{amsart}

\usepackage[dvips]{graphicx}
\usepackage{amssymb,amsmath}
\usepackage[all]{xy}
\usepackage{enumitem}
\usepackage{makecell} 

\usepackage[dvipdfmx]{hyperref}

\newtheorem{theorem}{Theorem}[section]
\newtheorem{proposition}[theorem]{Proposition}
\newtheorem{lemma}[theorem]{Lemma}

\theoremstyle{definition}
\newtheorem{definition}[theorem]{Definition}
\newtheorem{example}[theorem]{Example}

\theoremstyle{remark}
\newtheorem{remark}[theorem]{Remark}

\numberwithin{equation}{section}

\newcommand{\Cl}{\mathrm{C\ell}}

\title{Variants of Coxeter quandles associated with Pin groups}
\author{Yuichi Kabaya}
\dedicatory{Dedicated to Tomoyoshi Yoshida on the occasion of his 80th birthday}
\address{Faculty of Engineering, Kitami Institute of Technology,
165 Koen-cho Kitami, Hokkaido, JAPAN}
\email{kabaya@mail.kitami-it.ac.jp}
\thanks{}

\begin{document}
\maketitle

\begin{abstract}
We study two families of quandles arising from Coxeter quandles.
One is the quandle defined by Andruskiewitsch-Gra\~{n}a, 
which is the set of roots with binary operation
defined by using the negatives of reflections.
We observe that this is realized as a conjugation quandle in a Pin group.
The other, which we call a rotational $D_n$ quandle, 
is the set of some right angle rotations in the Coxeter group of type $D_n$
with binary operation given by conjugation.
We determine their inner automorphism groups, and observe that they are quite similar. 
\end{abstract}

\section{Introduction}
Quandles are sets with binary operation satisfying some axioms.
Since these axioms correspond to Reidemeister moves of knot diagrams, 
quandles have been intensively studied in low-dimensional topology.

Let $W$ be a Coxeter group.
The \emph{Coxeter quandle} $Q_W$ is a set of all reflections of $W$ 
with the binary operation $*$ is given by conjugation; 
for $x, y \in Q_W (\subset W)$, $x*y = y^{-1} x y$.
For simplicity, we assume that $W$ is finite in this introduction. 
(See later sections for general Coxeter groups.) 
In this case, $W$ acts on the Euclidean space $\mathbb{R}^n$ preserving 
the inner product $(,)$.
For non-zero vector $\beta \in \mathbb{R}^n$, 
a reflection with respect to $\beta$ is defined by 
\[
\alpha \mapsto \alpha * \beta = \alpha - 2 \frac{(\alpha, \beta)}{\{ \beta \|^2} \beta
\quad (\alpha \in \mathbb{R}^n).
\]
For a finite Coxeter group $W$, 
there is an associated finite set $\Phi \subset \mathbb{R}^n$, called a root system, 
satisfying the following properties:
$W$ is generated by the reflections $\{ * \beta \mid \beta \in \Phi \}$, 
$W$ preserves $\Phi$ setwise, and $\alpha = c \alpha$ if and only if $c = \pm 1$.
It is known that $\Phi \to Q_W$, $\beta \mapsto *\beta $ is a two-to-one map
and the $*$ operation on $\Phi$ reduces to the quandle operation of $Q_W$.
On the other hand, $(\Phi, *)$ is not a quandle, but it has a rack structure 
\cite{AG}, \cite{Akita}. $(\Phi, *)$ is called a \emph{Coxeter rack}.

Andruskiewitsch-Gra\~{n}a defined a quandle associated to any rack
\cite{AG}. 
In particular, they observed that the quandle associated 
to a Coxeter rack $(\Phi, *)$ has the binary operation 
\[
\alpha \widetilde{*} \beta = - \alpha * \beta
= - \alpha + 2 \frac{(\alpha, \beta)}{\{ \beta \|^2} \beta
\quad (\alpha, \beta \in \Phi).
\]
We will observe that 
the above definition of $\widetilde{*}$ is also obtained by pulling back the reflection 
$* \beta$  under the double covering $\mathrm{Pin}(n) \to \mathrm{O}(n)$.
So we call the quandle $(\Phi, \widetilde{*})$ the \emph{double covering} of $Q_W$
and denote it by $DQ_W$. The quandle $DQ_W$ is not faithful, 
and in many cases, its inner automorphism group $\mathrm{Inn}(DQ_{W})$ 
is isomorphic to $W$ 
(see Proposition \ref{prop:inner_auto_of_DQ_W} for the precise statement). 

We introduce another quandle $Q^r_{D_n}$ related to the Coxeter group of type $D_n$.
The quandle structure of $Q^r_{D_n}$ is easily described in terms of a Clifford algebra.
Let $\{ e_1, \cdots, e_n \}$ be the standard basis of $\mathbb{R}^n$.
The Clifford algebra $\Cl (\mathbb{R}^n)$ is the algebra generated by $e_1, \cdots , e_n$ 
with relations $e_i e_j + e_j e_i = - 2(e_i, e_j)$. 
Let $Q^r_{D_n} = \{ e_i e_j \mid i \neq j \} = \{ \pm e_i e_j\mid i < j\} 
\subset \Cl(\mathbb{R}^n)$.
Define
\[
(e_i e_j) * (e_k e_l) 
= \begin{cases} 
e_i e_j & ( \textrm{if $| \{ i, j \} \cap \{ k , l \} | \equiv 0 \mod 2$} \, ) \\
e_i e_j e_k e_l & ( \textrm{if $| \{ i, j \} \cap \{ k , l \} | \equiv 1 \mod 2$} \, )
\end{cases}.
\]
It is not difficult to directly show that $(Q^r_{D_n}, *)$ satisfies the quandle axioms, 
but we will show that, under the following identification, 
\[
Q^r_{D_n} \ni e_i e_j \longleftrightarrow (1 + e_ie_j)/\sqrt{2} \in \mathrm{Spin}(n),
\]
$Q^r_{D_n}$ is a quandle by conjugation in $\mathrm{Spin}(n)$.
Here we remark that 
$(1 + e_ie_j)/\sqrt{2} = \cos \dfrac{\pi}{4} + e_ie_j \, \sin \dfrac{\pi}{4}$ 
gives a $\pi/2$ rotation of $\mathbb{R}^n$ via $\mathrm{Spin}(n) \to \mathrm{SO}(n)$.
Since $Q^r_{D_n} = \{ \pm e_i e_j\mid i < j\}$ 
is one-to-one corresponds to the set of positive root of type $D_n$, 
$\{ e_i \pm e_j \mid i < j \}$ by $e_i e_j \longleftrightarrow e_i \pm e_j$, 
we call $Q^r_{D_n}$ a \emph{rotational $D_n$ quandle}.
We will show that the inner automorphism group $\mathrm{Inn}(Q^r_{D_n})$, 
in many cases, is isomorphic to the Coxeter group $W$
(see Theorem \ref{thm:inner_automorphism_group_of_Q^r_D_n} for the precise statement).

This paper is organized as follows.
In \S \ref{sec:quandles}, we will recall the definition of quandles and some terminologies. 
In \S \ref{sec:coxeter_quandles}, we will review Coxeter groups and Coxeter quandles.
In particular, we determine the inner automorphism group of a Coxeter quandle.
We will introduce the double covering $DQ_W$ in \S \ref{sec:double_coverings} and 
the rotational $D_n$ quandle $Q^r_{D_n}$ in \S\ref{sec:rotational_D_n}.
In the final section \S \ref{sec:vendramin's_classification}, 
we will give a list of $Q_W$, $DQ_W$ and $Q^r_{D_n}$ 
which appear in Vendramin's classification of small connected quandles.

\medskip\medskip
\noindent
\textbf{Acknowledgments.}
This work was supported by JSPS KAKENHI Grant Number JP23K03082.

\section{Quandles}
\label{sec:quandles}
\begin{definition}
A \emph{quandle} is a set $X$ with a binary operation $* : X \times X \to X$ satisfying 
\begin{enumerate}[label={(Q\arabic*)}]
\item
\label{axiom_1}
$x * x = x$ $(x \in X)$,
\item
\label{axiom_2}
$\forall y \in X$, $* y : X \to X$, \,  $x \mapsto x*y$ is a bijection,
\item
\label{axiom_3}
$(x*y)*z = (x*z) * (y*z)$ \, $(x, y, z \in X)$.
\end{enumerate}
If it satisfies the axioms \ref{axiom_2} and \ref{axiom_3} but may not \ref{axiom_1}, 
$(X,*)$  is called a \emph{rack}.
\end{definition}

Let $G$ be a group. 
For $g, h \in G$, we define $*$ by $g*h = h^{-1} g h$.
It is easy to check that $*$ satisfies \ref{axiom_1} and \ref{axiom_3}.
Thus, for a subset $S \subset G$ closed under conjugation 
(i.e. $g, h \in S$ implies $h^{-1} g h, h g h^{-1} \in S$), 
$(S, *)$ is a quandle.


For quandles $X, Y$, a map $f: X \to Y$ is called a homomorphism 
if it satisfies $f(x * y) = f(x) * f(y)$.
A bijective homomorphism is called an isomorphism. 
Remark that the inverse map of an isomorphism is also an isomorphism.
Thus the set of all isomorphisms $X \to X$ forms a group with multiplication 
given by composition of isomorphisms. 
This groups is called the \emph{automorphism group} of $X$, 
and denoted by $\mathrm{Aut}(X)$.

For any $y \in X$, $*y : X \to X$ $x \mapsto x*y$ is 
an isomorphism by \ref{axiom_2} and  \ref{axiom_3}.
The subgroup of $\mathrm{Aut}(X)$ generated by $\{ *y \mid y \in X\}$ 
is called an \emph{inner automorphism group}, and denoted by $\mathrm{Inn} (X)$.
If $X$ is finite, $\mathrm{Aut}(X)$ and $\mathrm{Inn}(X)$ are explicitly realized as 
subgroups of the permutation group of degree $|X|$.
When $X$ is a rack, 
$\mathrm{Aut}(X)$ and $\mathrm{Inn}(X)$ are defined in the same way.

If the map $X \to \mathrm{Inn} (X)$, $y \mapsto *y$ is injective, 
$X$ is called \emph{faithful}.
If $\mathrm{Inn}(X)$ acts on $X$ transitively, $X$ is called \emph{connected} 
(or indecomposable).

Let $(R,*)$ be a rack.
For any $y \in R$, we denote the inverse of $*y$ by $*^{-1}y$. 
Let $x, y, z \in R$.
Since $( (x *^{-1} z ) * (y *^{-1} z) ) * z  = x * y$, 
we have $(x *^{-1} z ) * (y *^{-1} z) = (x * y) *^{-1} z$.
This means that $*^{-1} z$ is an automorphism of $(R, *)$.
Since 
\[
\begin{split}
\Bigl( \Bigl( (x *^{-1} y ) *^{-1} z \Bigr) * (y *^{-1} z) \Bigr) * z 
= (x *^{-1} y) * y = x, \\
\Bigl( \Bigl( (x *^{-1} z ) *^{-1} (y *^{-1} z) \Bigr) * (y *^{-1} z) \Bigr) * z  
= (x *^{-1} z) * z = x, 
\end{split}
\]
we have $(x *^{-1} y ) *^{-1} z = (x *^{-1} z ) *^{-1} (y *^{-1} z)$. 
Thus $(R, *^{-1})$ is a rack.
Since $ \Bigl( ( x *^{-1} y) * z \Bigr) * (y*z) =  ( ( x *^{-1} y) * y ) * z  = x *z$,  
we have $( x *^{-1} y) * z = (x*z) *^{-1} (y * z)$. 
This means that $*z$ is an automorphism of $(R, *^{-1})$.
Since $(x * (y*^{-1}y) ) * y  = (x*y ) * y$, we have
\begin{equation}
\label{eq:fiber}
x * (y*^{-1}y)  = x*y. 
\end{equation}

For a rack $(R, *)$, 
define $*_{\iota} : R \times R \to R$ by
\begin{equation}
\label{eq:associated_quandle}
x *_{\iota} y := ( x *^{-1} x) * y \quad (x,y \in R).
\end{equation}
Then $(R, *_{\iota})$ clearly satisfies \ref{axiom_1} and \ref{axiom_2}.
(The inverse of $*_{\iota}y$ is given by $x *_{\iota}^{-1} y = (x*x)*^{-1} y$.)
For $x, y, z \in R$, using (\ref{eq:fiber}), we have
\[
\begin{split}
(x *_{\iota} z) *_{\iota} (y *_{\iota} z)
&= \Bigl( \Bigl( ( x *^{-1} x) * z \Bigr) *^{-1} \Bigl( ( x *^{-1} x) * z \Bigr) \Bigr) * ((y*^{-1}y)*z) \\
&= \Bigl( \Bigl( ( x *^{-1} x) *^{-1} ( x *^{-1} x) \Bigr) * z \Bigr) * ((y*^{-1}y)*z) \\
&= \Bigl( \Bigl( ( x *^{-1} x) *^{-1} ( x *^{-1} x) \Bigr) * (y*^{-1}y) \Bigr)  * z\\
&= \Bigl( \Bigl( ( x *^{-1} x) *^{-1} ( x *^{-1} x) \Bigr) * y \Bigr)  * z\\
&= \Bigl( (( x *^{-1} x) * y) *^{-1} (( x *^{-1} x)*y) \Bigr) * z \\
&= (x *_{\iota} y) *_{\iota} z.
\end{split}
\]
Thus $(R, *_{\iota})$ also satisfies \ref{axiom_3}. 
The quandle $(R, *_{\iota})$ is defined by 
Andruskiewitsch-Gra\~{n}a \cite[\S 1.1.1]{AG}, 
called the \emph{quandle associated to} a rack $(R.*)$.

\section{Coxeter quandles}
\label{sec:coxeter_quandles}
\subsection{Coxeter groups and quandles}
\label{subsec:coxeter_groups_and_quandles}
A \emph{Coxeter group} $W$ is a group generated by $S = \{s_1, \cdots , s_n\}$ subject to 
the relations
\[
(s_1)^2 = 1, \, \cdots \, , (s_n)^2 = 1, \quad
(s_is_j)^{m_{i j}} = 1 \quad (i \neq j), 
\]
where $m_{i j} = m_{j i} \in \{2, 3, 4, \cdots , \infty \}$. 
Here $(s_is_j)^{\infty} = 1 $ means no relation for the pair $s_i$, $s_j$.
The pair $(W, S)$ is called a \emph{Coxeter system}.
If we set $m_{ii} = 1$ $(i = 1, \cdots , n)$, 
we have $W = \langle s_1, \cdots , s_n \mid (s_i s_j)^{m_{i j}} = 1 \rangle$.
The number $n = |S|$ is called the \emph{rank} of $(W, S)$.

The \emph{Coxeter graph} of a Coxeter system $(W, S)$ 
is a simplicial graph with edge labeling,  
defined by the following rules.
\begin{itemize}
\item The vertices one-to-one correspond to $s_1, \cdots , s_n$.
\item $s_i$, $s_j$ $(i \neq j)$ are joined by an edge if $m_{ij} \geq 3$. 
\item Each edge is labeled by $m_{ij}$. 
(If the edge label is $m_{ij} = 3$, it is usually omitted.)
\end{itemize}
Conversely, a simplicial graph $\Gamma$ with edge labeling 
by $\{ 3, 4, \cdots , \infty\}$ uniquely determine 
a Coxeter system $(W_{\Gamma}, S_{\Gamma})$.
If a Coxeter diagram $\Gamma$ is decomposed into 
connected components as $\Gamma = \Gamma_1 \sqcup \cdots \sqcup \Gamma_r$, 
then $W_{\Gamma}$ is isomorphic to the direct product 
$W_{\Gamma_1} \times \cdots \times W_{\Gamma_r}$.
If the Coxeter graph is connected, 
the corresponding Coxeter group is called \emph{irreducible}.

Let $\{ \alpha_1, \cdots, \alpha_n \}$ be a basis of $V = \mathbb{R}^n$.
Define a symmetric bilinear form on $V$ by requiring 
\begin{equation}
\label{eq:symmetric_bilinear_form}
B(\alpha_i, \alpha_j) = - \cos \frac{\pi}{m_{ij}}.
\end{equation}
This is interpreted as $B(\alpha_i, \alpha_j) = -1$ when $m_{ij} = \infty$.
$\alpha_i$ is a unit vector with respect to $B$ since $m_{ii} = 1$.
For each $s_i$, we define a linear map $\sigma_{s_i} : V \to V$ by 
\[
\sigma_{s_i} (x) = x - 2 B(\alpha_{i}, x) \alpha_{i} \quad (x \in V).
\] 
It is easy to check that $\sigma_{s_i}$ preserves the form $B$, 
and $\sigma_{s_i}$ is an involution.
There is a unique homomorphism 
$\sigma : W \to \mathrm{GL}(V)$ which sends $s_i$ to $\sigma_{s_i}$
\cite[Proposition 5.3]{Humphreys}.
Moreover, this representation is faithful \cite[Corollary 5.4]{Humphreys}.
So we may regard $W$ as a subgroup of $\mathrm{GL}(V)$ 
and write $\sigma(w) (x)$ as $w(x)$ for $x \in V$, $w \in W$. 

We define the \emph{root system} of $(W, S)$ by 
\[
\Phi = \{ w (\alpha_i) \mid w \in W, \, i = 1, \cdots , n \} \subset V.
\]
Clearly, $\Phi$ is closed under the action of $W$. 
Since $W$ preserves $B$, any element of $\Phi$ is a unit vector.
Since $\{ \alpha_1, \cdots, \alpha_n \}$ is a basis of $V$, 
we can write any $\alpha \in \Phi$ uniquely of the form  
$\alpha  = \sum_{i=1}^n c_{\alpha,i} \alpha_i$ ($c_{\alpha, i} \in \mathbb{R}$).
If $c_{\alpha, i} \geq 0$ (resp. $c_{\alpha, i} \leq 0$) for all $i$, 
$\alpha$ is called \emph{positive} (resp. \emph{negative}).
Let $\Phi^+$ (resp. $\Phi^-$) be the set of all positive (resp. negative) roots of $\Phi$.
It is known that $\Phi^- = - \Phi^+$ and $\Phi$ is a disjoint union of 
$\Phi^{+}$ and $\Phi^{-}$, thus $\Phi = \Phi^+ \sqcup (-\Phi^+)$ \cite[\S 5.4]{Humphreys}.

For $\alpha = w(\alpha_i)  \in \Phi$, we associate 
$s_{\alpha} =  w \sigma_{s_i} w^{-1} \in W \subset \mathrm{GL}(V)$.
Since 
\begin{equation}
\label{eq:reflection}
s_{\alpha} (x) = w (\sigma_{s_i} (w^{-1} x))
= w ( w^{-1} x  - 2 B(\alpha_i, w^{-1} x) \alpha_i )
= x - 2 B(\alpha, x) \alpha,
\end{equation}
$s_{\alpha}$ depend only on $\alpha$, 
not on the choice of $w$ and $\alpha_{i}$.
Since $s_{w (\alpha)}(x) = x - 2 B(w (\alpha), x) w (\alpha)
= w( w^{-1} (x) - 2 B(\alpha, w^{-1} (x)) \alpha) 
= w s_{\alpha} w^{-1} (x)$ for $x \in V$, we have
\[
s_{w \alpha} = w s_{\alpha} w^{-1}, \textrm{ in particular, } 
s_{s_\beta(\alpha)} = s_{\beta} s_{\alpha} s_{\beta}^{-1}.
\]
We have $s_{\alpha}(\alpha) = - \alpha $, 
and $s_{\alpha}(x) = x$ for $x \in V$ perpendicular to $\alpha$ with respect to $B$.
Thus $s_{\alpha}$ is a reflection with respect to the hyperplane perpendicular to $\alpha$. 
Let $Q_W$ be the set of such reflections; 
\[
Q_W = \{ s_\alpha \mid \alpha \in \Phi \} 
= \bigcup_{w \in W} \{ w \sigma_{s_i} w^{-1} \mid  i = 1, \cdots, n \}.
\]
Since $Q_W \subset W$ is closed under conjugation, $Q_W$ has a quandle structure by 
$s_{\alpha} * s_{\beta} = s_{\beta}^{-1} s_{\alpha} s_{\beta}$.
$Q_W$ is called a \emph{Coxeter quandle}.

For $\alpha, \beta \in \Phi$, let
\begin{equation}
\label{eq:star_operation}
\alpha * \beta = s_{\beta} (\alpha) = 
\alpha - 2 B(\alpha, \beta) \beta.
\end{equation}
For $\alpha, \beta, \gamma \in \mathbb{R}^n$, we have
\begin{equation}
\label{eq:distributive}
\begin{split}
(\alpha * \gamma) * (\beta * \gamma)
&= \alpha * \gamma 
- 2 B (\alpha*\gamma, \beta*\gamma)\beta * \gamma
= \alpha * \gamma - 2 B (\alpha, \beta) \beta * \gamma  \\
&= (\alpha - 2 B(\alpha, \beta) \beta ) * \gamma
= (\alpha * \beta ) * \gamma, 
\end{split}
\end{equation}
since $\sigma_{\gamma}$ is a linear map preserving $B$.
Thus $(\Phi, *)$ satisfies \ref{axiom_3}, and since $s_\alpha \in \mathrm{GL}(V)$, 
it also satisfies \ref{axiom_2}. 
Thus $(\Phi, *)$ is a rack, but $(\Phi, *)$ is not a quandle 
since $\alpha * \alpha = s_{\alpha}(\alpha) = - \alpha$.
The rack $(\Phi, *)$ is called a \emph{Coxeter rack} \cite{AG}, \cite{Akita}.

The map $p : \Phi \to Q_W$ defined by $p(\alpha) = s_{\alpha}$ is clearly surjective. 
We have $s_{-\alpha} = s_{\alpha}$, moreover, $p$ is a two-to-one map. 
In fact, if $s_{\alpha} = s_{\beta}$,
$-\beta = s_{\beta} (\beta) = s_{\alpha} (\beta) = \beta - 2 B(\alpha, \beta) \alpha$
means that $\beta = B(\alpha, \beta) \alpha$.
This means that $\beta = \pm \alpha$ since $\alpha$, $\beta$ are unit vectors.
Thus $p$ induces a bijection $\Phi/\{ \alpha \sim -\alpha \} \xrightarrow{\cong} Q_W$.
Since
\[
p(\alpha * \beta) = p(s_{\beta} (\alpha)) = s_{s_{\beta} (\alpha)} 
= s_{\beta} s_\alpha s_{\beta}^{-1} = s_{\beta}^{-1} s_\alpha s_{\beta} = p(\alpha) * p(\beta), 
\]
this map is a quandle isomorphism \cite{Akita}.
So we may regard $Q_W$ as $\Phi/\{ \alpha \sim -\alpha \}$.

For $\alpha, \beta \in \Phi$ and $w \in W$, 
\[
w (\alpha * \beta) 
= w ( \alpha - 2 B(\beta, \alpha) \beta)
= w (\alpha) - 2 B(w(\beta), w(\alpha)) w (\beta)
= w(\alpha) * w(\beta),
\]
means that $W$ acts on $\Phi$.
Since $\Phi$ contains the basis $\{ \alpha_1, \cdots , \alpha_n \}$, 
this action is faithful.
Thus we can regard $W$ as a subgroup of $\mathrm{Aut}(\Phi)$.
Since $W$ is generated by $s_{\alpha_i} = \sigma_{s_i}$, 
we have $\mathrm{Inn} (\Phi) = W$.
In summary, we have the following:
\begin{lemma}
\label{lem:inner_automorphism_group_of_coxeter_rack}
Let $\Phi$ be the Coxeter rack of a Coxeter group $W$.
Then $\mathrm{Inn} (\Phi) \cong W$.
\end{lemma}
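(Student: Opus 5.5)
The plan is to realize $\mathrm{Inn}(\Phi)$ inside the group of bijections of $\Phi$ and compare it with the image of $W$ under restriction of its linear action to $\Phi$. First I would note that, by (\ref{eq:star_operation}), the inner automorphism $*\beta$ of the rack $(\Phi,*)$ is exactly the restriction of the linear map $s_\beta\in W\subset \mathrm{GL}(V)$ to $\Phi$. So I would introduce the restriction homomorphism $\rho : W \to \mathrm{Sym}(\Phi)$, $w\mapsto w|_{\Phi}$. It is well defined because $\Phi$ is $W$-stable, and it is a homomorphism since composition of linear maps restricts to composition of bijections. The computation $w(\alpha*\beta)=w(\alpha)*w(\beta)$ shows that $\rho$ actually lands in $\mathrm{Aut}(\Phi)$.

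Next I would prove that $\rho$ is injective. If $w|_\Phi=\mathrm{id}$, then $w$ fixes each simple root $\alpha_i\in\Phi$. Since $\{\alpha_1,\dots,\alpha_n\}$ is a basis of $V$ and $w$ is linear, $w=\mathrm{id}_V$. Faithfulness of $\sigma$ (\cite[Corollary 5.4]{Humphreys}) then gives $w=1$ in $W$. Hence $W\cong\rho(W)$.

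It then remains to identify $\rho(W)$ with $\mathrm{Inn}(\Phi)$. For the inclusion $\mathrm{Inn}(\Phi)\subset\rho(W)$: each generator $*\beta$ of $\mathrm{Inn}(\Phi)$ equals $\rho(s_\beta)$ with $s_\beta\in W$, and $\rho(W)$ is a subgroup. For the reverse inclusion: $W$ is generated by $\sigma_{s_i}=s_{\alpha_i}$, so $\rho(W)$ is generated by $\rho(s_{\alpha_i})=*\alpha_i$, and these lie in $\mathrm{Inn}(\Phi)$ because $\alpha_i\in\Phi$. Thus $\mathrm{Inn}(\Phi)=\rho(W)\cong W$.

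There is no real obstacle in this argument. The only point needing care is injectivity of $\rho$, which relies on two facts: the simple roots lie in $\Phi$ and span $V$, and the geometric representation is faithful. Both are available from the setup above.
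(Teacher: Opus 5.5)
Your proposal is correct and follows essentially the same argument as the paper. The paper also notes that $w(\alpha*\beta)=w(\alpha)*w(\beta)$, so $W$ acts on $\Phi$ by automorphisms; that this action is faithful because $\Phi$ contains the basis $\{\alpha_1,\dots,\alpha_n\}$; and that $\mathrm{Inn}(\Phi)=W$ since $W$ is generated by the $s_{\alpha_i}=\sigma_{s_i}$. You simply spell out the two inclusions between $\mathrm{Inn}(\Phi)$ and the image of $W$ more explicitly than the paper does.
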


\begin{lemma}
\label{lem:inner_automorphism_group_of_coxeter_quandle}
Let $Q_W$ be a Coxeter quandle of an irreducible Coxeter group $W$.
Then $\mathrm{Inn} (Q_W) \cong W / \{ \pm 1 \}$ if $W$ contains $-1$, 
or $\mathrm{Inn} (Q_W) \cong W$ otherwise.
\end{lemma}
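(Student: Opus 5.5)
The plan is to realize $\mathrm{Inn}(Q_W)$ as a quotient of $W$ and identify the kernel with the center. Since $Q_W \subset W$ is a conjugation quandle, the inner automorphism $*s_\beta$ is the restriction of conjugation by $s_\beta$ to $Q_W$. The group $W$ acts on $Q_W$ by conjugation, $w \cdot s_\alpha = w s_\alpha w^{-1} = s_{w(\alpha)}$. This action is by quandle automorphisms, which gives a homomorphism $\rho : W \to \mathrm{Aut}(Q_W)$. Its image is generated by $\rho(s_\beta)$, $\beta \in \Phi$, and these are exactly the maps $*s_\beta$ because $s_\beta^{-1} = s_\beta$. Hence $\rho(W) = \mathrm{Inn}(Q_W)$ and $\mathrm{Inn}(Q_W) \cong W/\ker\rho$. (Equivalently, one can use the identification $Q_W \cong \Phi/\{\alpha\sim-\alpha\}$ and Lemma \ref{lem:inner_automorphism_group_of_coxeter_rack}: $\mathrm{Inn}(\Phi) = W$ surjects onto $\mathrm{Inn}(Q_W)$ via $p$.)

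Next I compute $\ker\rho$. An element $w$ lies in the kernel iff $w$ commutes with every reflection. Since $W$ is generated by reflections, this holds iff $w$ is central, so $\ker \rho = Z(W)$. Alternatively, in terms of roots, $w \in \ker\rho$ iff $w(\alpha) = \pm\alpha$ for every $\alpha \in \Phi$. In particular $w(\alpha_i) = \varepsilon_i \alpha_i$ with $\varepsilon_i = \pm 1$, and $w$ is diagonal in the basis $\{\alpha_i\}$.

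The key step is to show that such a $w$ is $\pm 1$ when $W$ is irreducible. If $m_{ij} \geq 3$, then $s_i(\alpha_j) = \alpha_j + 2\cos(\pi/m_{ij})\alpha_i$ is a root with both coefficients nonzero. Its image $\varepsilon_j\alpha_j + 2\cos(\pi/m_{ij})\varepsilon_i\alpha_i$ must be $\pm s_i(\alpha_j)$, which forces $\varepsilon_i = \varepsilon_j$. Connectedness of the Coxeter graph then gives all $\varepsilon_i$ equal, so $w = \pm 1$ as a linear map. Since the representation $\sigma$ is faithful, this means $\ker\rho \subseteq \{1, -1\} \cap W$.

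Conversely, if $-1 \in W$, then it acts trivially on $Q_W$ because $s_{-\alpha} = s_\alpha$, so it lies in the kernel. This gives $\ker\rho = \{\pm 1\}$ if $-1 \in W$ and $\ker\rho = 1$ otherwise, which yields the statement.

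The only delicate point is the sign-propagation argument along edges of the Coxeter graph. It needs $\cos(\pi/m_{ij}) \neq 0$ exactly when $m_{ij} \geq 3$, including the case $m_{ij} = \infty$, where the coefficient is $2$. It also needs uniqueness of coefficients in the basis $\{\alpha_i\}$.
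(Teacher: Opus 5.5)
Your proof is correct and follows the same route as the paper. Both arguments take the surjection $W \cong \mathrm{Inn}(\Phi) \twoheadrightarrow \mathrm{Inn}(Q_W)$, whose kernel consists of those $w$ with $w(\alpha_i)=\pm\alpha_i$, and then propagate signs along the edges of the connected Coxeter graph. The one small difference is in the propagation step: you use that $w$ also fixes the non-simple root $s_i(\alpha_j)$ up to sign and compare coefficients, whereas the paper invokes $\Phi=\Phi^+\sqcup\Phi^-$ to rule out $\alpha_1 - c\,\alpha_k$ being a root. Both versions are valid.
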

\begin{proof}
The action of $W$ on $\Phi$ induces an action of $W$
on $\Phi/\{ \alpha \sim -\alpha \} \cong Q_W$, and we have a surjective homomorphism 
$W \cong  \mathrm{Inn} (\Phi) \twoheadrightarrow \mathrm{Inn} (Q_W)$.

Let $w \in W$ be an element in the kernel 
$W \cong \mathrm{Inn} (\Phi) \twoheadrightarrow \mathrm{Inn} (Q_W)$.
Since $w$ is in the kernel, we have $w \alpha_i = \pm \alpha_i$  in $\Phi$ 
for $i = 1, \cdots , n$. 
We will show that these have all of the same sign. 
This means that $w = \pm 1$, thus concludes the proof.

First, we assume that $w \alpha_1 = \alpha_1$.
Let $\alpha_k$ be one of $\alpha_2, \cdots , \alpha_n$ 
which is connected to $\alpha_1$ by an edge in the Coxeter graph. 
By (\ref{eq:symmetric_bilinear_form}) and $m_{1k} \neq 2$, 
we have $B(\alpha_1, \alpha_k) < 0$. 
By (\ref{eq:reflection}), 
$s_{\alpha_k} (\alpha_1) = \alpha_1 + c \, \alpha_k \in \Phi$ 
where $c$ is a positive real number.
We have $w (\alpha_1 + c \, \alpha_k) = w \alpha_1 + c \, w \alpha_k 
= \alpha_1 \pm c \, \alpha_k \in \Phi$.
But $\alpha_1 - c \, \alpha_k$ is never in $\Phi^+ \sqcup \Phi^- = \Phi$, 
we conclude that $w \alpha_k = \alpha_k$.
Since every vertex in the Coxeter graph is connected to the vertex corresponding 
to $\alpha_1$, we conclude that $w \alpha_i = \alpha_i$ for all $i = 1, \cdots , n$.

When $w \alpha_1 = -\alpha_1$, a similar argument works.
\end{proof}

From Lemma \ref{lem:inner_automorphism_group_of_coxeter_quandle}, 
we obtain the following.
\begin{proposition}
\label{prop:inner_automorphism_group_of_coxeter_quandle}, 
Let $Q_W$ be a Coxeter quandle of a Coxeter group $W$.
Let $\Gamma$ be the Coxeter graph of $W$, and 
$\Gamma = \Gamma_1 \sqcup \cdots \sqcup \Gamma_r$ be the decomposition into 
connected components.
Then $\mathrm{Inn} (Q_W) \cong W'_{\Gamma_1} \times \cdots \times W'_{\Gamma_r}$
where $W'_{\Gamma_i} \cong W_{\Gamma_i} / \{ \pm 1 \}$ if $W_{\Gamma_i}$ contains $-1$, 
or $W'_{\Gamma_i} = W_{\Gamma_i}$ otherwise.
\end{proposition}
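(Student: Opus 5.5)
The plan is to reduce to the irreducible case, Lemma \ref{lem:inner_automorphism_group_of_coxeter_quandle}, by showing that both the quandle $Q_W$ and its inner automorphism group split along the decomposition $\Gamma = \Gamma_1 \sqcup \cdots \sqcup \Gamma_r$. Write $W = W_{\Gamma_1} \times \cdots \times W_{\Gamma_r}$ and $V = V_1 \oplus \cdots \oplus V_r$, where $V_j$ is spanned by the simple roots $\alpha_i$ with $s_i \in \Gamma_j$. Since $B(\alpha_i,\alpha_k) = -\cos(\pi/2) = 0$ whenever $s_i, s_k$ lie in different components, this decomposition is $B$-orthogonal. Moreover, $W_{\Gamma_j}$ acts trivially on $V_k$ for $k \neq j$, because $\sigma_{s_i}$ fixes every vector $B$-orthogonal to $\alpha_i$. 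Hence $\Phi = \Phi_1 \sqcup \cdots \sqcup \Phi_r$, where $\Phi_j \subset V_j$ is the root system of $W_{\Gamma_j}$: an element $w = (w_1,\dots,w_r)$ sends $\alpha_i \in V_j$ to $w_j(\alpha_i)$.

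Next I would record how the rack operation behaves on this decomposition. For $\alpha \in \Phi_j$ and $\beta \in \Phi_k$ with $j \neq k$ we have $B(\alpha,\beta)=0$, so $\alpha * \beta = \alpha$. Passing to $Q_W = \Phi/\{\alpha\sim-\alpha\}$, this gives $Q_W = Q_{W_{\Gamma_1}} \sqcup \cdots \sqcup Q_{W_{\Gamma_r}}$ as a set. Each $Q_{W_{\Gamma_j}}$ is a subquandle, and elements from different pieces act trivially on one another. Consequently, for $y \in Q_{W_{\Gamma_j}}$, the map $*y$ preserves every piece, acts as the inner automorphism $*y$ on $Q_{W_{\Gamma_j}}$, and acts as the identity on the other pieces.

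From this, $\mathrm{Inn}(Q_W)$ is the subgroup of $\prod_j \mathrm{Sym}(Q_{W_{\Gamma_j}})$ generated by the images of the groups $\mathrm{Inn}(Q_{W_{\Gamma_j}})$, each embedded in its own factor. Generators coming from different factors commute and have supports on disjoint pieces, so the generated group is exactly the internal direct product $\mathrm{Inn}(Q_{W_{\Gamma_1}}) \times \cdots \times \mathrm{Inn}(Q_{W_{\Gamma_r}})$. Applying Lemma \ref{lem:inner_automorphism_group_of_coxeter_quandle} to each irreducible factor then gives $\mathrm{Inn}(Q_{W_{\Gamma_j}}) \cong W'_{\Gamma_j}$, which yields the proposition.

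The main point requiring care is the identification of the root system of $W$ with the disjoint union of the root systems of the factors, compatibly with the forms $B$. This is what guarantees that Lemma \ref{lem:inner_automorphism_group_of_coxeter_quandle} applies to each $Q_{W_{\Gamma_j}}$ as an honest Coxeter quandle of the irreducible group $W_{\Gamma_j}$. It follows from the observations above: the $B$-orthogonality of the $V_j$ and the triviality of the action of $W_{\Gamma_k}$ on $V_j$ for $k \neq j$.
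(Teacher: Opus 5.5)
Your proof is correct, but it is organized differently from the paper's. The paper never decomposes the quandle. It takes $w$ in the kernel of the surjection $W \cong \mathrm{Inn}(\Phi) \twoheadrightarrow \mathrm{Inn}(Q_W)$. It then reruns the sign-propagation argument from the proof of Lemma~\ref{lem:inner_automorphism_group_of_coxeter_quandle} inside each connected component $\Gamma_i$, showing that $w$ multiplies all simple roots of that component by a common sign. This identifies the kernel with $W \cap \{\mathrm{diag}(\pm I_{n_1}, \ldots, \pm I_{n_r})\}$, and the product formula is read off from there.

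You instead split $Q_W$ into the subquandles $Q_{W_{\Gamma_j}}$, which act trivially on one another. You observe that the inner automorphism group of such a disjoint union is the direct product of the inner automorphism groups of the pieces, and then invoke the lemma as a black box.

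Your route is more modular, and it makes explicit a point the paper leaves tacit. To pass from its kernel description to $W'_{\Gamma_1} \times \cdots \times W'_{\Gamma_r}$, the paper needs that $W = W_{\Gamma_1} \times \cdots \times W_{\Gamma_r}$ acts block-diagonally on $V = V_1 \oplus \cdots \oplus V_r$, with each $W_{\Gamma_i}$ acting on $V_i$ by its own geometric representation. That is what gives $W \cap \{\mathrm{diag}(\pm I_{n_1}, \ldots, \pm I_{n_r})\} = \prod_i \bigl(W_{\Gamma_i} \cap \{\pm I_{n_i}\}\bigr)$, and it is exactly what your first paragraph verifies. The paper's route, in turn, is shorter and does not need the general fact about trivially interacting subquandles. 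It also directly produces $\mathrm{Inn}(Q_W)$ as $W$ modulo block sign matrices, which is what underlies the remark following the proposition that $\mathrm{Inn}(Q_W)$ sits inside $\mathrm{PGL}(n_1,\mathbb{R}) \times \cdots \times \mathrm{PGL}(n_r,\mathbb{R})$.
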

\begin{proof}
As in the proof of Lemma \ref{lem:inner_automorphism_group_of_coxeter_quandle}, 
let $w \in W$ be an element in the kernel 
$W \cong \mathrm{Inn} (\Phi) \twoheadrightarrow \mathrm{Inn} (Q_W)$.
Let $\{ \alpha_{i, 1}, \cdots , \alpha_{i, n_i} \} \subset 
\{ \alpha_1, \cdots , \alpha_n \}$ be the set of vectors corresponding to 
the vertices of $\Gamma_i$.
As in the proof of Lemma \ref{lem:inner_automorphism_group_of_coxeter_quandle}, 
we can show that $w \alpha_{i, j} = \pm \alpha_{i, j }$ $(j = 1, \cdots , n_i)$ 
have all of the same sign. 
After rearranging the order of basis vectors, we have
\[
\mathrm{Ker} (\mathrm{Inn} (\Phi) \twoheadrightarrow \mathrm{Inn} (Q_W)) 
= W \cap \left\{ 
\begin{pmatrix} 
\pm I_{n_1} & & O \\ 
& \ddots &  \\ 
O &  & \pm I_{n_r} 
\end{pmatrix} 
\right\}.
\]
This concludes the proof.
\end{proof}

We remark that $\mathrm{Inn} (Q_W)$ is realized as a subgroup of 
$\mathrm{PGL}(n_1, \mathbb{R}) \times \cdots \times \mathrm{PGL}(n_r, \mathbb{R} )$
in Proposition \ref{prop:inner_automorphism_group_of_coxeter_quandle}.

\subsection{Finite Coxeter groups}
\label{subsec:finite_coxeter_groups}
It is known that $(W, S)$ is finite 
if and only if the symmetric form $B$ is positive definite
\cite[Theorem 6.4]{Humphreys}.

All irreducible finite Coxeter groups are classified into the following types:
\begin{itemize}
\item Infinite families: $A_n$, $BC_n$, $D_n$, $I_2(m)$
\item Exceptional ones: $E_6$, $E_7$, $E_8$, $F_4$, $H_3$, $H_4$
\end{itemize}
For example, we write the Coxeter group of type $A_n$ by $W_{A_n}$, 
its root system by $\Phi_{A_n}$, and its Coxeter quandle by $Q_{A_n}$.

Accompanied with Proposition \ref{prop:inner_automorphism_group_of_coxeter_quandle}, 
the following fact is fundamental \cite[Corollary 3.19]{Humphreys}.
\begin{proposition}
\label{prop:contains_-1}
Let $W$ be an irreducible finite Coxeter group.
$-1 \notin W$ if and only if $W$ is one of types $A_n$  ($n \geq 2$), 
$D_n$ ($n$ : odd), $E_6$, or $I_2(m)$ ($m$ : odd).
\end{proposition}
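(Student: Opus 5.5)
The proposition is cited from Humphreys, so I would give a direct proof from the classification. The strategy rests on one standard criterion: for a finite irreducible $W$ with longest element $w_0$, we have $-1 \in W$ if and only if $w_0 = -1$. The reason is that $-1$ sends every positive root to a negative root. Since $w_0$ is the unique element of $W$ with $w_0(\Phi^+) = \Phi^-$, any element doing this must equal $w_0$. This uses the standard facts $\Phi = \Phi^+ \sqcup (-\Phi^+)$ and that $\ell(w) = |\{\alpha \in \Phi^+ : w\alpha \in \Phi^-\}|$, together with simple transitivity of $W$ on positive systems. I would take these from \cite[\S 5.4--5.6]{Humphreys}.

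Equivalently, $-1 \in W$ if and only if the map $\alpha \mapsto -w_0(\alpha)$ is the identity on the simple roots. This map always permutes the simple roots and induces an automorphism of the Coxeter graph. So a first necessary condition is easy to state. If $-1 \notin W$, then $-w_0$ is a nontrivial graph automorphism, and in particular the Coxeter graph must have a nontrivial symmetry.

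Next I would go through the cases using explicit models.

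\begin{itemize}
\item Type $BC_n$: $W$ is the full group of signed permutations of $e_1, \dots, e_n$, which contains $-1$.
\item Type $D_n$: $W$ is the group of signed permutations with an even number of sign changes. It contains $-1$ exactly when $n$ is even.
\item Type $A_n$: $W = S_{n+1}$ acts on the hyperplane $\sum x_i = 0$ by permuting coordinates. The element $-1$ would have to be a permutation matrix acting as $-1$ on that hyperplane, which is impossible for $n \ge 2$. For $n = 1$ the reflection itself is $-1$.
\item Type $I_2(m)$: $W$ is the dihedral group of order $2m$. The rotation by $\pi$ lies in it exactly when $m$ is even.
\item Types $E_7$, $E_8$, $F_4$, $H_3$, $H_4$: the Coxeter graph has no nontrivial automorphism, except that $F_4$ has the diagram flip, which does not preserve root lengths for the crystallographic form. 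So I would instead use the degree criterion, that $-1 \in W$ if and only if all degrees are even, or check directly that $w_0 = -1$. For $H_3$ one can use $W_{H_3} \cong A_5 \times \{\pm 1\}$.
\item Type $E_6$: $-1 \notin W$ because $-w_0$ is the nontrivial diagram involution. Equivalently, the degrees $2,5,6,8,9,12$ include odd ones.
\end{itemize}

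The main obstacle is the exceptional types. There the cleanest uniform route is the theorem that $-1 \in W$ if and only if every degree of $W$ is even. This holds because $\det(-1)$ on each invariant generator forces $(-1)^{d_i} = 1$, and conversely the product formula $\prod (1 + (d_i - 1)t)$ gives the sign. I would quote the table of degrees from \cite{Humphreys} rather than rederive it. The remaining risk is the $F_4$ graph flip. For that case the degrees $2,6,8,12$ settle it immediately.
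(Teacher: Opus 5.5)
\textbf{Verdict.} Your proof is correct. It takes a genuinely different route from the paper, and one stated justification is wrong but not needed.

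\textbf{Comparison of routes.} The paper gives no argument of its own. It cites Humphreys (Cor.~3.19), where the list comes uniformly from the criterion ``$-1\in W$ iff every degree $d_i$ is even'', combined with the table of degrees. One direction holds because $f_i(-x)=(-1)^{d_i}f_i(x)$ for a basic invariant $f_i$. The other holds because, when all exponents $d_i-1$ are odd, a Coxeter element $c$ of order $h$ satisfies $c^{h/2}=-1$.

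You instead reduce to the longest element: $-1\in W$ iff $w_0=-1$ iff the diagram automorphism induced by $-w_0$ is trivial. You then run through the classification, using explicit models for $A_n$, $BC_n$, $D_n$, $I_2(m)$ and rigidity of the Coxeter graph for $E_7$, $E_8$, $H_3$, $H_4$. This is more elementary for almost every type. It also shows that exceptions can only occur among types whose graph has a nontrivial symmetry.

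The cost is that it is case-by-case and still needs outside input for $E_6$. There a symmetry exists, and you must show that $-w_0$ actually realizes it. Your sentence ``$-w_0$ is the nontrivial diagram involution'' only restates the claim. What proves it is the odd degrees $5,9$, which uses only the easy direction of the degree criterion. Alternatively, $W$ acts trivially on $P/Q\cong\mathbb{Z}/3\mathbb{Z}$, whereas $-1$ acts by inversion. For $A_n$, write out the one-line reason: a permutation acting as $-1$ on $e_1-e_2$ and on $e_1-e_3$ would have to send $1$ to both $2$ and $3$.

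\textbf{The incorrect justification.} Your argument for the converse of the degree criterion does not work. The identity $\sum_{w\in W}t^{\mathrm{codim}\,V^w}=\prod_i\bigl(1+(d_i-1)t\bigr)$ only counts elements by the dimension of their fixed spaces. It cannot detect whether $-1\in W$.

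A correct argument is the Coxeter-element one above. Alternatively: if all $d_i$ are even, every $W$-invariant polynomial is even. Then $\langle W,-1\rangle$ has the same invariant ring as $W$, hence the same order, by Molien's formula or by Artin's theorem applied to $\mathbb{R}(V)^W$.

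You do not need this direction at all. For $F_4$, your root-length remark is already a complete proof. $-w_0$ is an isometry preserving the crystallographic root system, so the permutation it induces on simple roots preserves lengths and cannot be the flip.

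In the paper's normalization, all roots are unit vectors, so use orbits instead of lengths. We have $-w_0(\alpha_i)=w_0s_{\alpha_i}(\alpha_i)\in W\alpha_i$. Also, $\alpha_1$ and $\alpha_4$ lie in different $W$-orbits: since $m_{23}=4$ is even, the reflections of $W_{F_4}$ split into two conjugacy classes. Together these rule out the flip.
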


For a finite Coxeter group, we use the following definition of the root system.
A finite subset $\mathrm{\Phi} \subset \mathbb{R}^n \setminus \{ 0 \}$ is called 
an \emph{abstract root system} if it satisfies 
\begin{itemize}
\item
$\alpha, \beta \in \mathrm{\Phi} \Longrightarrow  
s_{\beta}(\alpha) \in \mathrm{\Phi}$,
\item
$\beta = c \alpha  \,\,\, (\alpha, \beta \in \mathrm{\Phi}, \, c \in \mathbb{R}) 
\Longrightarrow c = \pm 1$,
\item
the linear span of $\Phi$ is $\mathbb{R}^n$.
\end{itemize}
Here, $s_{\beta} (x) = x - \frac{2 (x, \beta)}{(\beta, \beta)} \beta$ 
is a reflection with respect to the hyperplane 
perpendicular to $\beta$.
An element of $\mathrm{\Phi}$ is called a \emph{root}.
Fix a total ordering $\leq$ on $\mathbb{R}^n$ satisfying 
(i) $x \leq y \Longrightarrow x + z \leq y + z$ $(x, y, z \in \mathbb{R}^n)$, and
(ii) $x \leq y \Longrightarrow c x \leq c y$ and $ -c x \geq -c y$
$(x, y \in \mathbb{R}^n, \, c > 0)$.
If we let $\Phi^{+} = \{ \alpha \in \Phi \mid \alpha > 0 \}$, 
and $\Phi^{-} = \{ \alpha \in \Phi \mid \alpha < 0 \}$, 
then it is known that $\Phi$ is a disjoint union of $\Phi^{+}$ and $\Phi^{-}$.
There exists a unique subset $\{ \alpha_1, \cdots , \alpha_n \} \subset \Phi^{+}$ 
such that any $\alpha \in \Phi^+$ is uniquely written as a linear combination of 
$\{ \alpha_1, \cdots , \alpha_n \}$ with non-negative coefficients 
\cite[Theorem 1.3]{Humphreys}.
$\alpha_1, \cdots , \alpha_n$ are called \emph{simple roots}.
Let $W$ be the group generated by $S = \{ s_{\alpha_1}, \cdots , s_{\alpha_n} \}$.
It is known that $(W, S)$ is a finite Coxeter group.
Conversely, any finite Coxeter group is written in this form by some abstract root system.
After normalizing the lengths of vectors of $\Phi$, 
this coincides with the set of roots in \S \ref{subsec:coxeter_groups_and_quandles}.

\begin {example}
\label{ex:dihedral_rack_and_quandle}
For an integer $m \geq 2$, 
$\Phi_{I_2(m)} = 
\{ (\cos \dfrac{k \pi}{m}, \, \sin \dfrac{k \pi}{m}) \}_{k =0, 1, \cdots, 2m -1} 
\subset \mathbb{R}^2$ forms a root system of $I_2(m)$.
We have
$(\cos \dfrac{i \pi}{m}, \, \sin \dfrac{i \pi}{m})
* (\cos \dfrac{j \pi}{m}, \, \sin \dfrac{j \pi}{m})
= (\cos \dfrac{i \pi}{m}, \, \sin \dfrac{i \pi}{m})
 - 2 \cos \dfrac{(i-j) \pi}{m} (\cos \dfrac{j \pi}{m}, \, \sin \dfrac{j \pi}{m})
= ( -\cos \dfrac{(i - 2j) \pi}{m}, \, \sin \dfrac{(i -2j)\pi}{m})
= ( \cos \dfrac{(2j - i + m) \pi}{m}, \, \sin \dfrac{(2j - i + m)\pi}{m})$.
If we identify $\mathbb{Z} / 2m \mathbb{Z}$ with $\Phi_{I_2(m)}$ by 
$k \longleftrightarrow (\cos \dfrac{k \pi}{m}, \, \sin \dfrac{k \pi}{m})$, 
the rack structure on $\mathbb{Z} / 2m \mathbb{Z}$ is given by
 $ i * j \equiv 2 j - i + m\mod 2 m$.
Thus the Coxeter quandle $Q_{I_2(m)}$ is regarded as $\mathbb{Z}/ m \mathbb{Z}$ 
with $i*j \equiv 2 j -i \mod m$.




\end{example}

\section{Double coverings of Coxeter Quandles}
\label{sec:double_coverings}
\subsection{Double coverings of Coxeter Quandles}
Let $\Phi$ be a root system of $W$ and 
$*$ be the binary operation defined by (\ref{eq:star_operation}).
Since $(\Phi, *)$ is a rack, we have the associated quandle (\ref{eq:associated_quandle}) 
defined by Andruskiewitsch-Gra\~{n}a. 
Since $* \alpha$ ($\alpha \in \Phi$) is an involution, 
we have $*^{-1} \alpha = * \alpha$. 
By $\alpha * \alpha = - \alpha$, we have
$\alpha *_{\iota} \beta = (\alpha *^{-1} \alpha) * \beta = (\alpha * \alpha) * \beta =
(-\alpha) * \beta = - (\alpha * \beta)$ for $\alpha, \beta \in \Phi$.
This explicit form of $*_{\iota}$ was already obtained in \cite[\S 1.3.5]{AG}.

Since the quandle $(\Phi, *_{\iota})$ can be seen as a double covering of $Q_W$, 
we denote this quandle by $(DQ_W, \widetilde{*})$.
In other words, we let $DQ_W = \Phi$ and 
\begin{equation}
\label{eq:tilde_*}
\alpha \widetilde{*} \beta :=  -(\alpha * \beta) 
= -s_{\beta} (\alpha) = -\alpha + 2 B(\alpha, \beta) \beta.
\end{equation}
We collect some basic identities:
\[
\begin{split}
(-\alpha) * \beta = - (\alpha * \beta), \quad \alpha * (-\beta) = \alpha * \beta, \quad
\alpha * \alpha = - \alpha, \\
(-\alpha) \widetilde{*} \beta = - (\alpha \widetilde{*} \beta), 
\quad \alpha \widetilde{*} (-\beta) = \alpha \widetilde{*} \beta, 
\quad \alpha \widetilde{*} \alpha = \alpha.
\end{split}
\]
So we do not need to distinguish $(-\alpha) * \beta$ and $-(\alpha * \beta)$
(also $(-\alpha) \widetilde{*} \beta$ and $- (\alpha \widetilde{*} \beta)$).
Although we already observed that $(DQ_W, \widetilde{*})$ is a quandle, 
it is easy to check \ref{axiom_3} directly form the definition (\ref{eq:tilde_*})
using these identities:
\[
\begin{split}
(\alpha \widetilde{*} \beta) \widetilde{*} \gamma 
&= - (- \alpha * \beta) * \gamma
= (\alpha * \beta) * \gamma  
= (\alpha * \gamma) * (\beta * \gamma) \\
&= -(-\alpha * \gamma) * (- \beta * \gamma ) 
= (\alpha \widetilde{*} \gamma) \widetilde{*} (\beta \widetilde{*} \gamma).
\end{split}
\]
We call the quandle $(DQ_W, \widetilde{*})$ the \emph{double covering of $Q_W$}. 
Since $\alpha \widetilde{*} (-\beta) = \alpha \widetilde{*} (\beta)$, $DQ_W$ 
is not faithful.
The natural quotient map
\[
DQ_W = \Phi \longrightarrow \Phi /  \{ \alpha \sim - \alpha \} = Q_W
\]
is clearly a two-to-one surjective quandle homomorphism.
We will see that when $W$ is finite,  
$DQ_W$ can be realized as the inverse image of $Q_W \subset \mathrm{O}(n)$ 
under the natural surjective homomorphism 
$\mathrm{Pin}(n) \twoheadrightarrow  \mathrm{O}(n)$.

\begin{remark}
The notion of quandle coverings was defined by Eisermann \cite{Eisermann}.
A \emph{quandle covering} is a surjective homomorphism $p : \widetilde{Q} \to Q$ 
of quandles $\widetilde{Q}$, $Q$ if $p(\widetilde{x}) = p(\widetilde{y})$ implies 
$\widetilde{a} * \widetilde{x} = \widetilde{a} * \widetilde{y}$ for all 
$\widetilde{a}, \widetilde{x}, \widetilde{y} \in \widetilde{Q}$.
The map $DQ_{W} \to Q_W$ is a quandle covering.
\end{remark}

Let $\Gamma$ be the Coxeter diagram of $W$.
Let $\Gamma_{\mathrm{odd}}$ be a graph 
whose vertices coincides with $\Gamma$ and two vertices $s_i$ and $s_j$  ($i \neq j$)
are connected by an edge if $m_{i j}$ is odd.
It is shown in \cite[Proposition 2.1]{Akita} that
$Q_W$ is connected if and only if $\Gamma_{\mathrm{odd}}$ is connected. 
For $DQ_W$, we have the following.

\begin{proposition}
Let $W$ be a Coxeter group of rank $n$ and $\Gamma$ be its Coxeter diagram.
$DQ_W$ is connected if \textup{(i)} $\Gamma_{\mathrm{odd}}$ is connected,  and 
\textup{(ii)} for any $\alpha \in DQ_W = \Phi$, there exists $\beta \in \Phi$ 
perpendicular to $\alpha$. 
\end{proposition}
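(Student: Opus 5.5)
The plan is to show that every orbit of $\mathrm{Inn}(DQ_W)$ contains both $\alpha$ and $-\alpha$ for each root $\alpha$. Hypothesis (ii) will produce the sign flip, and hypothesis (i) will supply connectivity of $Q_W$. Together these give transitivity on all of $\Phi$.

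\textbf{Step 1 (sign flip).} Let $\alpha \in \Phi$, and by (ii) choose $\beta \in \Phi$ with $B(\alpha,\beta)=0$. By (\ref{eq:tilde_*}),
\[
\alpha \widetilde{*} \beta = -\alpha + 2B(\alpha,\beta)\beta = -\alpha .
\]
So $\alpha$ and $-\alpha$ lie in the same $\mathrm{Inn}(DQ_W)$-orbit.

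\textbf{Step 2 (lifting connectivity of $Q_W$).} By \cite[Proposition 2.1]{Akita}, hypothesis (i) implies that $Q_W$ is connected. Take any $\alpha,\gamma \in \Phi$. Connectivity of $Q_W$ gives roots $\beta_1,\dots,\beta_k$ and signs $\epsilon_i=\pm1$ such that
\[
s_\gamma = (\cdots(s_\alpha *^{\epsilon_1} s_{\beta_1})\cdots) *^{\epsilon_k} s_{\beta_k}
\]
in $Q_W$. Every $*s_\beta$ is an involution, so we may take all $\epsilon_i=1$.

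The quotient map $DQ_W=\Phi \to Q_W$ is a quandle homomorphism. Hence
\[
\alpha' := (\cdots(\alpha \widetilde{*} \beta_1)\cdots)\widetilde{*}\beta_k
\]
maps to $s_\gamma$, and so $\alpha' = \pm\gamma$, using that $p$ is two-to-one as shown in \S\ref{subsec:coxeter_groups_and_quandles}. If $\alpha'=\gamma$ we are done. If $\alpha'=-\gamma$, apply Step 1 to $-\gamma$ to reach $\gamma$.

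Therefore $\mathrm{Inn}(DQ_W)$ acts transitively on $\Phi$, and $DQ_W$ is connected.

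Step 1 is the only place where (ii) enters, and once the sign issue is isolated it is immediate. The main point to get right is Step 2. There one must use that the fiber of $\Phi\to Q_W$ over $s_\gamma$ is exactly $\{\pm\gamma\}$, and that $\widetilde{*}\beta$ covers $*s_\beta$; both facts are already established in the paper.
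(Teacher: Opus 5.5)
Your proof is correct and essentially the same as the paper's. Both lift a chain from the connectivity of $Q_W$ (Akita's criterion under (i)) to $DQ_W$ through the two-to-one homomorphism $\Phi \to Q_W$, landing on $\pm$ the target root. Both then fix a wrong sign with one extra operation $\widetilde{*}\beta$, where $\beta$ is perpendicular to the target, using $\alpha \widetilde{*} \beta = -\alpha$ from (ii).
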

\begin{proof}
Let $\alpha, \alpha' \in DQ_W$.
Since $Q_W = DQ_W /  \{ \alpha \sim - \alpha \}$ is connected, 
there exists a sequence $\beta_1, \cdots , \beta_k \in DQ_W$ such that 
$\alpha' 
= \pm ( ( \cdots  ( (\alpha \widetilde{*} \beta_1) \widetilde{*} \beta_2 ) 
\widetilde{*} \cdots ) \widetilde{*} \beta_k ) $ in $DQ_{W}$.
If the sign is $+$, $\alpha$ and $\alpha'$ is clearly in the same orbit under 
the action of $\mathrm{Inn} (DQ_W)$.
Thus we assume that $\alpha'
= - ( ( \cdots  ( (\alpha \widetilde{*} \beta_1) \widetilde{*} \beta_2 ) 
\widetilde{*} \cdots ) \widetilde{*} \beta_k )$.
Take $\gamma \in \Phi$ perpendicular to $\alpha'$.
Since $\alpha' * \gamma = \alpha'$, we have $\alpha' \, \widetilde{*} \gamma = -\alpha'$.
Thus
$\alpha' = - \alpha' \, \widetilde{*} \gamma
= ( ( \cdots  ( (\alpha \widetilde{*} \beta_1) \widetilde{*} \beta_2 ) 
\widetilde{*} \cdots ) \widetilde{*} \beta_k ) \widetilde{*} \gamma$ 
implies $\alpha$ and $\alpha'$ is in the same orbit under 
the action of $\mathrm{Inn} (DQ_W)$.
\end{proof}

We remark that the condition (ii) is satisfied for any irreducible finite Coxeter group 
except dihedral groups (including $W_{A_2}$).
On the other hand, 
from Example \ref{ex:dihedral_rack_and_quandle},
we have $DQ_{I_2 (m)} \cong Q_{I_2 (2m)}$, and thus $DQ_{I_2 (m)}$ is not connected.

\begin{proposition}
\label{prop:inner_auto_of_DQ_W}
Let $W$ be a Coxeter group of rank $n$.
$\mathrm{Inn} (DQ_{W})$ is isomorphic to $W$ if $-1 \notin W$ or $n$ is even.
When $W$ is finite, 
$\mathrm{Inn} (DQ_{W}) \cong W$ if and only if $-1 \notin W$ or $n$ is even.
\end{proposition}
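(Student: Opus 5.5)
The plan is to realize $\mathrm{Inn}(DQ_W)$ as a group of linear maps of $V$, exactly as was done for $\mathrm{Inn}(\Phi)$ in Lemma \ref{lem:inner_automorphism_group_of_coxeter_rack}.

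By (\ref{eq:tilde_*}), the generator $\widetilde{*}\beta$ of $\mathrm{Inn}(DQ_W)$ is the restriction to $\Phi$ of the linear map $-s_\beta \in \mathrm{GL}(V)$. The set $\Phi$ is preserved by $\pm W$ and contains the basis $\{\alpha_1,\dots,\alpha_n\}$. Hence restriction to $\Phi$ is injective on the subgroup $G = \langle -s_\beta \mid \beta \in \Phi\rangle \subset \mathrm{GL}(V)$, and so $\mathrm{Inn}(DQ_W) \cong G$.

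Next I identify $G$ explicitly. A product of $k$ generators equals $(-1)^k s_{\beta_1}\cdots s_{\beta_k}$, and $\det(s_{\beta_1}\cdots s_{\beta_k}) = (-1)^k$. Therefore
\[
G = \{ \det(w)\, w \mid w \in W \}.
\]
Define $\phi : W \to G$ by $\phi(w) = \det(w)\, w$. Since $\det(w) = \pm 1$ is a central scalar, $\phi$ is a homomorphism, and it is surjective by the description of $G$ above. Its kernel consists of those $w$ with $w = \det(w)\cdot 1$, so $w = \pm 1$ as an element of $\mathrm{GL}(V)$. The identity always lies in the kernel. The element $-1$ lies in the kernel if and only if $-1 \in W$ and $\det(-1)\cdot(-1) = (-1)^{n+1} = 1$, that is, if and only if $-1 \in W$ and $n$ is odd.

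Consequently $\phi$ is an isomorphism whenever $-1 \notin W$ or $n$ is even, which proves the first assertion for arbitrary $W$. If instead $-1 \in W$ and $n$ is odd, then $\mathrm{Inn}(DQ_W) \cong W/\{\pm 1\}$. When $W$ is finite, this group has order $|W|/2$, so it cannot be isomorphic to $W$. This gives the converse in the finite case.

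I expect the only point needing care to be the faithfulness step, i.e. checking that distinct elements of $G$ act differently on $\Phi$. This follows because $\Phi$ spans $V$ and $G \subset \pm W$ preserves $\Phi$. The remaining steps are bookkeeping of signs.
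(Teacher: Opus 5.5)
Your proof is correct and follows the paper's argument. The paper likewise identifies $\mathrm{Inn}(DQ_W)$ with $\langle -s_\alpha \mid \alpha\in\Phi\rangle$ and uses the sign twist $w\mapsto\varepsilon(w)w$; your $\det|_W$ is exactly that character $\varepsilon$, so you also avoid citing Humphreys for its existence. The only difference is that you apply the twist in all cases and read off its kernel. This handles $-1\in W$ in one stroke and even gives $\mathrm{Inn}(DQ_W)\cong W/\{\pm 1\}$ when $n$ is odd, whereas the paper treats $-1\in W$ separately and, for $n$ odd, only shows that $\mathrm{Inn}(DQ_W)$ is a proper subgroup of $W$ because it lies in $\mathrm{GL}^+(n,\mathbb{R})$.
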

\begin{proof}
Let $\Phi$ be a root system of $W$, 
and $s_{\alpha}$ be the reflection with respect to $\alpha \in \Phi$.
As in the proof of Lemma \ref{lem:inner_automorphism_group_of_coxeter_rack}, 
we observe that $\mathrm{Inn} (DQ_W)$ is the group generated by 
$\{ -s_{\alpha} \mid \alpha \in \Phi \}$ in $\mathrm{Aut} (\Phi)$, 
i.e. $\mathrm{Inn} (DQ_W) = \langle \{ -s_{\alpha} \mid \alpha \in \Phi \} \rangle$.

We remark that, since $s_{\alpha}$ is an orientation reversing linear isomorphism, 
$\det ( - s_{\alpha}) > 0$ when $n$ is odd, 
and $\det ( - s_{\alpha}) < 0$ when $n$ is even.
Thus 
$\mathrm{Inn} (DQ_W) 
< \mathrm{GL}^+(n, \mathbb{R}) = \{ A \in \mathrm{GL}(n, \mathbb{R}) \mid \det A > 0\}$
when $n$ is odd.

First, we assume that $-1 \notin W$.
It is known that there exists a surjective homomorphism
$\varepsilon : W \to \{ \pm 1\}$ sending each $s_\alpha$ to $-1$ 
\cite[Proposition 5.1]{Humphreys}.
We define a homomorphism 
$\tau : W = \langle \{ s_{\alpha} \mid \alpha \in \Phi \} \rangle 
\to \langle \{ -s_{\alpha} \mid \alpha \in \Phi \} \rangle$
by $\tau(w)  = \varepsilon(w) w$ for $w \in W$.
$\tau$ is surjective and 
satisfies $\tau(s_\alpha)  = - s_{\alpha}$ for $\alpha \in \Phi$.
We show that $\tau$ is also injective.
Let $s_{\beta_1} s_{\beta_2} \cdots s_{\beta_k} \in \mathrm{Ker} \, \tau$ 
where $\beta_1, \cdots , \beta_k \in \Phi$.
We have $1 = \tau(s_{\beta_1} s_{\beta_2} \cdots s_{\beta_k}) 
= (-1)^k s_{\beta_1} s_{\beta_2} \cdots s_{\beta_k}$, 
thus $s_{\beta_1} s_{\beta_2} \cdots s_{\beta_k} = (-1)^k$.
Since $-1 \notin W$, $k$ must be even and
$s_{\beta_1} s_{\beta_2} \cdots s_{\beta_k} = 1$.
We conclude that $\tau$ is an isomorphism.

Next, we assume that $-1 \in W$.
We have
\[
W = \langle \{ s_{\alpha} \mid \alpha \in \Phi \} \rangle
= \langle \{ s_{\alpha} \mid \alpha \in \Phi \}, -1  \rangle
= \langle \{ -s_{\alpha} \mid \alpha \in \Phi \}, -1  \rangle
\geq \langle \{ -s_{\alpha} \mid \alpha \in \Phi \} \rangle
= \mathrm{Inn} (DQ_W).
\]
Here $-1 \in W$ means that there exist $\beta_1 , \cdots , \beta_k \in \Phi$
such that $-1 = s_{\beta_1} \cdots s_{\beta_k}$.
If $n$ is even, 
since $\det (-1) > 0$ and $\det s_{\beta_i} < 0$, $k$ must be even.
Thus we have $-1 = s_{\beta_1} \cdots s_{\beta_k} = (-s_{\beta_1}) \cdots (-s_{\beta_k})
\in \mathrm{Inn} (DQ_W)$ and conclude that $W = \mathrm{Inn} (DQ_W)$.
If $n$ is odd, since $-1 \notin \mathrm{GL}^+(n, \mathbb{R})$, 
we have $W \gneq \mathrm{Inn} (DQ_W)$. 
If $W$ is finite, $\mathrm{Inn} (DQ_W)$ is not isomorphic to $W$. 
\end{proof}

By Proposition \ref{prop:contains_-1}, 
if $W$ is an irreducible finite Coxeter group other than $BC_n$ ($n$ : odd), $E_7$ or $H_3$, 
we have $\mathrm{Inn} (DQ_{W}) \cong W$.
By a GAP calculation \cite{Gap}, 
we observe that $\mathrm{Inn} (DQ_{E_7}) \cong \mathrm{O}(7,2)$, 
$W_{E_7} \cong C_2 \times \mathrm{O}(7,2)$, 
$\mathrm{Inn} (DQ_{H_3}) \cong A_5$, 
and $W_{H_3} \cong C_2 \times A_5$, 
where $C_2$ is the cyclic group of order $2$, 
$A_5$ is the alternating group of order $5$, 
and $\mathrm{O}(7,2)$ is the orthogonal group of the 7 dimensional vector space 
over the finite field $\mathbb{F}_2$.
Apart from irreducible ones, for example, we have 
$W_{D_4 \times E_7}  \gneq \mathrm{Inn} (DQ_{D_4 \times E_7})$.

\subsection{Clifford algebras}
Let $V$ be an $n$-dimensional real vector space with an inner product 
$(,) : V \times V \to \mathbb{R}$.
Let $\displaystyle T = \bigoplus_{k \geq 0} V^{\otimes k}$ be the tensor algebra, 
and $I$ the two-sided ideal of $T$ generated by all elements of the form
$v \otimes w + w \otimes v =-2 (v, w)$ where $v, w \in V$.
The quotient $\Cl(V) := T/I$ is called the \emph{Clifford algebra}.
If we fix an orthonormal basis $\{ e_1, \cdots , e_n \}$ of $V$,
then $e_{i_1} e_{i_2} \cdots e_{i_k}$ ($i_1 < i_2 < \cdots < i_k$) form
a vector space basis of $\Cl(V)$.
For example, we have the following relations:
\[
e_i e_j = - e_j e_i \,\, (i \neq j), \quad e_i^2 = -1.
\]
Let $\Cl ^{0}(V)$ (resp. $\Cl ^{1}(V)$) be the subspace of $\Cl(V)$ 
spanned by products of even (resp. odd) number of $e_1, \cdots , e_n$. 
Clearly, $\Cl (V) = \Cl ^{0}(V) \oplus \Cl ^{1}(V)$ and 
$\Cl ^{0}(V)$ is a subalgebra of $\Cl(V)$.

There exists a unique automorphism $\alpha : \Cl (V) \to \Cl (V)$ which extends
the map $\alpha(v) = - v$ ($v \in V$) on $V$ \cite[p. 9]{LawsonMichelsohn}.
We have $\alpha|_{\Cl ^{0}(V)} = \mathrm{id}$ and $\alpha|_{\Cl ^{1}(V)} = -\mathrm{id}$.

\subsection{Pin and Spin}
For $n \geq 2$, $\mathrm{Pin}(n)$ is the multiplicative subgroup of $\Cl(V)$ 
generated by elements of $V$ of norm $1$.
The \emph{spin group} $\mathrm{Spin}(n)$ is defined by 
$\mathrm{Spin} (n) = \mathrm{Pin}(n) \cap \Cl^{0}(V)$.
Define a right action of $\mathrm{Spin}(n)$ on $V$ by
\begin{equation}
\label{eq:spin_action}
v \cdot s = s^{-1} v s
\quad (v \in V, \, s \in \mathrm{Spin}(n) ).
\end{equation}
This is an orientation preserving linear isometry of $V$, 
thus gives a homomorphism $\rho : \mathrm{Spin}(n) \to \mathrm{SO}(n)$.
(Because the inner automorphism group of a quandle naturally acts on the quandle
from the right, we prefer right action.)
It is known that, $\rho$ is a surjective two-to-one map.

For example, for $s = (\cos \theta) 1 + (\sin \theta) e_i e_j$ 
($i \neq j$, $\theta \in \mathbb{R}$), 
$v \mapsto s^{-1} v s$ is the $-2 \theta$-rotation about the origin
on the plane $\mathbb{R}e_i \oplus \mathbb{R} e_j$, 
and the identity on its orthogonal complement.

For $v, w \in V$ ($w \neq 0$), we define $v \cdot w \in V$ by
\[
v \, \cdot \, w = w^{-1} v \, \alpha(w) = w^{-1} v (-w) 
= \frac{1}{\| w \|^2} w v w
= \frac{1}{\| w \|^2} (-v w - 2 (v,w)) w
= v - \frac{2 (v,w)}{\| w \|^2} w.
\]
(We remark that $w^{-1} = -\frac{w}{\|w\|^2}$ for non-zero $w \in V$.)
The map $v \mapsto v \cdot w$ is the reflection with respect to 
the hyperplane perpendicular to $w$.
By definition, any element $s$ of $\mathrm{Pin}(n)$ can be written as 
a product $s = v_1 v_2 \cdots v_k$ by some unit vectors $v_1, \cdots , v_k \in V$.
Define the action of $\mathrm{Pin}(n)$ on $V$ by
\[
v \cdot s = ( \cdots ( (v \cdot v_1) \cdot v_2)  \cdots ) \cdot v_k 
= s^{-1} v \alpha(s)
\quad (v \in V, \,\, s = v_1 \cdots v_k \in \mathrm{Pin}(n)).
\]
The map $v \mapsto v \cdot s$ is an element of $\mathrm{O}(n)$
since it is a composition of reflections. 
This gives a surjective homomorphism $\rho : \mathrm{Pin}(n) \to \mathrm{O}(n)$
since $\mathrm{O}(n)$ is generated by reflections.
The restriction of $\rho$ to $\mathrm{Spin}(n)$ coincides with (\ref{eq:spin_action}).
It is known that $\rho$ is two-to-one map \cite[p. 19]{LawsonMichelsohn}.
For non-zero element $w \in V$, the inverse image of 
$ \, \cdot \, w \in \mathrm{O}(n)$ is 
$\{ \pm \frac{w}{\| w \|} \} \subset \mathrm{Pin}(n)$.

Let $W$ be a finite Coxeter group of rank $n$ and $\Phi$ be its root system.
The inverse image of 
$Q_{W} = \{ s_{\alpha} = *\alpha \mid \alpha \in \Phi \}\subset \mathrm{O}(n)$
under $\mathrm{Pin}(n) \to \mathrm{O}(n)$ 
is $\{ \frac{\alpha}{\| \alpha \|} \mid  \alpha \in \Phi \}$.
We can freely change a scalar factor of $\Phi$, we regard 
$\widetilde{Q}_W = \{ \alpha  \mid  \alpha \in \Phi \}$
as the inverse image of $Q_W$.
$\widetilde{Q}_W$ has a quandle structure by conjugation
\[
\alpha * \beta = \beta^{-1} \alpha \beta
= - \frac{1}{\| \beta \|^2} \beta \alpha \beta
= - \frac{1}{\| \beta \|^2} (-\alpha \beta - 2 (\alpha, \beta))\beta
= -\alpha + \frac{2 (\alpha, \beta))}{\| \beta \|^2} \beta.
\]
This is the negative of the reflection with respect to $\beta$.
Thus the quandle $\widetilde{Q}_W$ by conjugations is isomorphic 
to the double cover $DQ_W$.

\begin{remark}
The Pin group $\mathrm{Pin}(n)$ is also denoted by $\mathrm{Pin}_-(n)$.
If we define the Clifford multiplication by a positive definite form 
\[
v w + w v = 2(v ,w),
\]
the corresponding Pin group is denoted by $\mathrm{Pin}_+(n)$.
We have also a two-to-one surjective homomorphism $\mathrm{Pin}_+(n) \to \mathrm{O}(n)$.
The inverse image of $Q_W \subset \mathrm{O}(n)$ in $\mathrm{Pin}_+(n)$ forms 
a quandle by conjugations. In this case, we have 
\[
\alpha * \beta = \beta^{-1} \alpha \beta
= \frac{1}{\| \beta \|^2} \beta \alpha \beta
= \frac{1}{\| \beta \|^2} (-\alpha \beta + 2 (\alpha, \beta))\beta
= -\alpha + \frac{2 (\alpha, \beta))}{\| \beta \|^2} \beta.
\]
Thus this is also isomorphic to $DQ_W$.
But we remark that the inverse images of $W < \mathrm{O}(n)$ in 
$\mathrm{Pin}_{+}(n)$  and $\mathrm{Pin}_{-}(n)$ may differ.
\end{remark}

\begin{remark}
Even if $W$ is not finite, we can define the Clifford algebra associated with
the symmetric bilinear form $B$ defined in \S \ref{subsec:coxeter_groups_and_quandles}. 
We can also define $\mathrm{Pin}(n, B)$ and $\mathrm{Spin}(n, B)$ 
and $\mathrm{O}(n, B)$, and regard $DQ_W$ as an inverse image of 
$W < \mathrm{O}(n, B)$ in $\mathrm{Pin}(n, B)$.
\end{remark}

\section{Rotational $D_n$ quandles}
\label{sec:rotational_D_n}
\subsection{Root system of  type $D_n$}
\label{subsec:root_system_of_type_D_n}
Let $n$ be an integer with $n \geq 2$. 
Let $e_1, \cdots , e_n$ be the standard basis of $\mathbb{R}^n$.
Then
\[
\Phi_{D_n} = \{ \pm (e_i \pm e_{j}) \mid i < j \}
\]
forms a root system in the sense of \S \ref{subsec:finite_coxeter_groups}. 
This is the root system of type $D_n$.
If we use the ascending lexicographic order, 
the set of positive roots is $\{ e_i \pm e_j  \mid i < j \}$, 
and the set of simple roots is 
$\{ e_1 - e_2, \, e_2 - e_3, \, \cdots , \, e_{n-1} - e_n, \, e_{n-1} + e_n \}$.
There are $2 n(n-1)$ roots, $n (n-1)$ positive roots and $n$ simple roots.
Usually, $D_2$ is recognized as $A_1 \times A_1$ and $D_3$ as $A_3$.

Similarly, 
\[
\Phi_{B_n} = \{ \pm ( e_i \pm e_j ) \mid i < j \} 
\sqcup \{ \pm e_i \mid i = 1, \cdots , n \} 
\]
is a root system, called a root system of type $B_n$.
The former set of long (length $\sqrt{2}$) roots coincides with $\Phi_{D_n}$,
and the latter set of short (length $1$) roots is a root system of $(A_1)^{n}$.
Let $\Omega = \{ \pm e_1, \cdots , \pm e_n\}$ be the set of short roots of $\Phi_{B_n}$.
Since any reflection $s_{\alpha}$ $(\alpha \in \Phi_{B_n})$ preserves the set $\Omega$, 
$W_{B_n}$ acts on $\Omega$.
Moreover, this action is faithful since $\Omega$ contains a basis of $\mathbb{R}^n$ 
and $W_{B_n} < \mathrm{O}(n)$. 
The natural projection 
$\Omega = \{ \pm e_1 , \cdots , \pm e_n \} \to 
\overline{\Omega} = \{ e_i , \cdots , e_n \} :
\pm e_i \mapsto e_i$ induces an action of $W_{B_n}$ on $\overline{\Omega}$.
Since $s_{e_i + e_j}$ acts on $\overline{\Omega}$ by transposition, 
$W_{B_n}$ acts on $\overline{\Omega}$ as the full permutation group. 
Thus we have a surjective homomorphism 
$W_{B_n} \twoheadrightarrow \mathrm{Aut}(\overline{\Omega}) \cong S_n$ 
where $S_n$ is the symmetric group of degree $n$.
Let $K$ be the kernel of $W_{B_n} \twoheadrightarrow S_n$.
Since any element of $K$ only changes signs of $\{ \pm e_1 , \cdots , \pm e_n\}$, 
$K$ can be regarded as a subgroup of $(\mathbb{Z} / 2 \mathbb{Z})^n$.
Since $s_{e_i}$ changes only the sign of the $i$-th slot, we conclude that 
$K = (\mathbb{Z} / 2 \mathbb{Z})^n$.
Thus we have a short exact sequence 
\[
0 \to (\mathbb{Z} / 2 \mathbb{Z})^n \to W_{B_n} \to S_n \to 1.
\]
For $\sigma \in S_n$, $\widetilde{\sigma} (\pm e_i) = \pm \sigma (e_i)$ gives 
a splitting of this short exact sequence.
Thus $W_{B_n}$ is isomorphic to a semi-direct product 
$(\mathbb{Z} / 2 \mathbb{Z})^n \rtimes S_n$.
We can realize $W_{B_n}$ in $\mathrm{O}(n)$ explicitly as follows.
For $\sigma \in S_n$ and $\varepsilon_i = \pm 1$ $(i = 1, \cdots n)$, 
we define an $n \times n$ matrix $M(\sigma, \{ \varepsilon_i \})$ by 
\begin{equation}
\label{eq:definition_of_M_sigma_epsilon}
M(\sigma, \{ \varepsilon_i \} ) = (\varepsilon_i \delta_{i, \sigma(j)})_{i, j}
\end{equation}
where $\delta_{i, j}$ is the Kronecker delta. 
Then we have
\begin{equation}
W_{B_n} = \left\{ M(\sigma , \{ \varepsilon_i \})
\mid \sigma \in S_n , \,\, \varepsilon_i = \pm 1 \, (i = 1, \cdots n) \right\}.
\end{equation}
For example, 
\[
W_{B_2} = \left\{  
\pm \begin{pmatrix} 1 & 0 \\ 0 & 1 \end{pmatrix}, 
\pm \begin{pmatrix} 1 & 0 \\ 0 & -1 \end{pmatrix}, 
\pm \begin{pmatrix} 0 & 1 \\ 1 & 0 \end{pmatrix}, 
\pm \begin{pmatrix} 0 & -1 \\ 1 & 0 \end{pmatrix}
\right\}  < \mathrm{O}(2).
\]

Next, we determine the group structure of $W_{D_n}$.
Since $\Phi_{D_n} \subset \Phi_{B_n}$, $W_{D_n}$ is a subgroup of $W_{B_n}$.
We also have a surjective homomorphism 
$W_{D_n} \twoheadrightarrow \mathrm{Aut}(\overline{\Omega}) \cong S_n$.
Again, the kernel of this homomorphism is a subgroup of $(\mathbb{Z} / 2 \mathbb{Z})^n$, 
but $s_{e_i \pm e_j}$ changes only even number of signs.
Thus $W_{D_n}$ is isomorphic to a semi-direct product 
$(\mathbb{Z} / 2 \mathbb{Z})^{n-1} \rtimes S_n$.
Explicitly, we have
\begin{equation}
\label{eq:D_n_Weyl_group}
W_{D_n} = \left\{ M(\sigma , \{ \varepsilon_i \})
\mid \sigma \in S_n , \,\, \varepsilon_i = \pm 1 \, (i = 1, \cdots n)
, \,\, \varepsilon_1 \varepsilon_2 \cdots \varepsilon_n = 1 \right\}.
\end{equation}
If we define a homomorphism $\mathrm{sign}$ by
\[
\begin{array}{cccc}
\mathrm{sign} : & W_{B_n} & \to & \{ \pm 1 \}  \\
& \rotatebox[origin=c]{90}{$\in$} & & \rotatebox[origin=c]{90}{$\in$} \\
& M(\sigma, \{ \varepsilon_i \}) & \mapsto 
& \varepsilon_1 \varepsilon_2 \cdots \varepsilon_n
\end{array},
\]
then $W_{D_n}$ is the kernel of this homomorphism.

By Proposition \ref{prop:contains_-1}, we know that $-1 \in W_{D_n}$ 
if and only if $n$ is even, 
but this is easily checked by the description (\ref{eq:D_n_Weyl_group}). 
As a consequence, we determine the structure of the inner automorphism group 
of a Coxeter quandle $Q_{D_n}$.

\begin{proposition}
Let $Q_{D_n}$ be a finite Coxeter quandle of type $D_n$. Then 
\[
\mathrm{Inn} (Q_{D_n}) \cong 
\begin{cases}
(\mathbb{Z}/2 \mathbb{Z})^{n-1} \rtimes S_n & \textrm{(if $n$ is odd),} \\
(\mathbb{Z}/2 \mathbb{Z})^{n-2} \rtimes S_n & \textrm{(if $n$ is even and $n \neq 2$),} \\
\{ 1 \} & \textrm{(if $n=2$).}
\end{cases}
\]
\end{proposition}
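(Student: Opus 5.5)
We apply Lemma \ref{lem:inner_automorphism_group_of_coxeter_quandle} (or, for $n=2$, Proposition \ref{prop:inner_automorphism_group_of_coxeter_quandle}) together with the explicit description (\ref{eq:D_n_Weyl_group}) of $W_{D_n}$.

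For $n \geq 3$, the Coxeter graph of $D_n$ is connected ($D_3 = A_3$), so $W_{D_n}$ is irreducible and Lemma \ref{lem:inner_automorphism_group_of_coxeter_quandle} applies. By (\ref{eq:D_n_Weyl_group}), $-1 = M(\mathrm{id}, \{-1,\dots,-1\})$ lies in $W_{D_n}$ exactly when $(-1)^n = 1$, that is, when $n$ is even. Hence for $n$ odd we get $\mathrm{Inn}(Q_{D_n}) \cong W_{D_n} \cong (\mathbb{Z}/2\mathbb{Z})^{n-1} \rtimes S_n$. For $n$ even we get $\mathrm{Inn}(Q_{D_n}) \cong W_{D_n}/\{\pm 1\}$, and it remains to identify this quotient.

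The only genuine step is this identification for even $n \geq 4$. The element $-1$ lies in the normal subgroup $K' = \{ M(\mathrm{id},\{\varepsilon_i\}) \mid \prod \varepsilon_i = 1\} \cong (\mathbb{Z}/2\mathbb{Z})^{n-1}$. Since the complement $\{ M(\sigma, \{1\}) \} \cong S_n$ meets $K'$ trivially, it also meets $\{\pm 1\}$ trivially, so it maps injectively into the quotient. The quotient is therefore $(K'/\{\pm 1\}) \rtimes S_n$, with the induced permutation action on $K'/\{\pm1\}$; this action is well defined because $-1$ is central. Finally, $K'/\{\pm 1\}$ is elementary abelian of order $2^{n-2}$, so it is $(\mathbb{Z}/2\mathbb{Z})^{n-2}$. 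To make this rigorous, I would exhibit the short exact sequence
\[
1 \to K'/\{\pm1\} \to W_{D_n}/\{\pm1\} \to S_n \to 1,
\]
whose splitting is induced by $\sigma \mapsto M(\sigma,\{1\})$.

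For $n=2$, the simple roots $e_1 - e_2$ and $e_1 + e_2$ are orthogonal, so the graph consists of two isolated vertices and $W_{D_2} = W_{A_1} \times W_{A_1}$. Each factor $W_{A_1} = \{\pm 1\}$ acting on its line contains $-1$, so by Proposition \ref{prop:inner_automorphism_group_of_coxeter_quandle} each factor contributes $W_{A_1}/\{\pm1\} = 1$. Hence $\mathrm{Inn}(Q_{D_2})$ is trivial. Alternatively, one can check directly that $Q_{D_2}$ consists of two commuting reflections, so all the $*$-operations are trivial.
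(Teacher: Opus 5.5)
Your proposal is correct and follows essentially the paper's route. Like the paper, you apply Lemma \ref{lem:inner_automorphism_group_of_coxeter_quandle} (and Proposition \ref{prop:inner_automorphism_group_of_coxeter_quandle} for the reducible case $n=2$), using the fact, read off from (\ref{eq:D_n_Weyl_group}), that $-1\in W_{D_n}$ exactly when $n$ is even. Your check that $-1$ lies in the sign-change subgroup and that the $S_n$ complement passes isomorphically to $W_{D_n}/\{\pm 1\}$ fills in the identification $(\mathbb{Z}/2\mathbb{Z})^{n-2}\rtimes S_n$ that the paper leaves implicit.
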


\subsection{Rotational $D_n$ quandles}
Let 
\[
Q^r_{D_n} = \{ e_i e_j \mid i \neq j \} = \{ \pm e_i e_j \mid i < j \} \subset \Cl (\mathbb{R}^n).
\]
We remark that elements of $Q^r_{D_n}$ are symmetric or skew-symmetric.
\begin{lemma}
\label{lem:symmetric_or_skew_symmetric}
For $i \neq j$ and $k \neq l$, 
\begin{equation}
\begin{split}
(e_i e_j) (e_k e_l) &= 
\begin{cases}
(e_k e_l) (e_i e_j) & \textrm{(if $i$, $j$, $k$, $l$ are distinct)}\\
(e_k e_l) (e_i e_j) = -1 & \textrm{(if $i = k$ and $j = l$)}\\
(e_k e_l) (e_i e_j) = 1 & \textrm{(if $i = l$ and $j = k$)}\\
-(e_k e_l) (e_i e_j) & \textrm{(otherwise)}
\end{cases} \\
&= 
\begin{cases}
(e_k e_l) (e_i e_j) & (\textrm{if $| \{ i, j \} \cap \{ k , l \} | \equiv 0 \mod 2$} \, ) \\
-(e_k e_l) (e_i e_j) & (\textrm{if $| \{ i, j \} \cap \{ k , l \} | \equiv 1 \mod 2$} \, )
\end{cases}.
\end{split}
\end{equation}
\end{lemma}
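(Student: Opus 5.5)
The plan is to compute directly from the Clifford relations $e_a e_b = -e_b e_a$ for $a \neq b$ and $e_a^2 = -1$. The underlying principle is that moving one generator $e_a$ past another $e_b$ costs a sign exactly when $a \neq b$. I will split into cases according to $m = |\{i,j\} \cap \{k,l\}| \in \{0,1,2\}$. In each case I compare $(e_ie_j)(e_ke_l)$ with $(e_ke_l)(e_ie_j)$ by moving the block $e_k e_l$ to the left across $e_i e_j$.

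\emph{Case $m = 0$} ($i,j,k,l$ distinct). Moving $e_k$ leftward past $e_j$ and then $e_i$ costs two sign changes. The same holds for $e_l$. The total sign is therefore $(-1)^4 = 1$, so the two elements commute.

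\emph{Case $m = 2$.} Either $(k,l) = (i,j)$ or $(k,l) = (j,i)$. In the first case both products equal $(e_ie_j)^2 = e_i e_j e_i e_j = -e_i e_i e_j e_j = -(-1)(-1) = -1$. In the second case $(e_ie_j)(e_je_i) = e_i(e_j^2)e_i = -e_i^2 = 1$, and symmetrically $(e_je_i)(e_ie_j) = 1$. So in both subcases the products coincide and equal $-1$ and $1$ respectively, as stated.

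\emph{Case $m = 1$.} Exactly one index is shared, and the other two indices, one from each pair, are distinct. Up to reordering each pair, which only introduces an overall sign on both sides simultaneously and does not affect the commutation relation, we may assume $j = k$ and $i \neq l$. We then compute
\[
(e_ie_j)(e_je_l) = e_i e_j^2 e_l = -e_ie_l
\qquad\text{and}\qquad
(e_je_l)(e_ie_j) = e_j e_l e_i e_j .
\]
In the second product, moving the final $e_j$ leftward past $e_i$ and then $e_l$ gives two sign changes, yielding $e_j e_j e_l e_i = -e_le_i = e_ie_l$. Hence the two products anticommute.

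Alternatively, a uniform argument handles all cases at once. Move $e_k$ across $e_ie_j$: this costs a sign for each of $i,j$ different from $k$, giving $(-1)^{2 - [k \in \{i,j\}]}$. Moving $e_l$ similarly gives $(-1)^{2 - [l \in \{i,j\}]}$. The total sign is $(-1)^{m}$, which is exactly the second displayed form of the statement.

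The only real care needed is the reduction by reordering in the case $m = 1$. This is handled either by noting that $e_je_i = -e_ie_j$ scales both sides equally, or by simply using the uniform sign-count argument.
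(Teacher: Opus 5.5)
Your proof is correct. Both the case analysis on $m=|\{i,j\}\cap\{k,l\}|$ and the uniform sign count giving $(-1)^m$ are valid verifications using only $e_ae_b=-e_be_a$ for $a\neq b$ and $e_a^2=-1$. The paper states this lemma without proof, treating it as exactly this direct computation in $\Cl(\mathbb{R}^n)$, so your approach is the intended one.
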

Assume that $i \neq j$ and $k \neq l$.
From Lemma \ref{lem:symmetric_or_skew_symmetric},  we have
\begin{equation}
\label{eq:action_on_the_wedge_product}
\begin{split}
& \frac{1}{\sqrt{2}} (1 - e_k e_l) \cdot e_i e_j \cdot \frac{1}{\sqrt{2}} (1 + e_k e_l) \\
&= \frac{1}{2} (e_i e_j - e_k e_l e_i e_j + e_i e_j e_k e_l - e_k e_l e_i e_j e_k e_l) \\
&= \begin{cases} 
e_i e_j & ( \textrm{if $| \{ i, j \} \cap \{ k , l \} | \equiv 0 \mod 2$} \, ) \\
e_i e_j e_k e_l & ( \textrm{if $| \{ i, j \} \cap \{ k , l \} | \equiv 1 \mod 2$} \, )
\end{cases}.
\end{split}
\end{equation}
Since $(1 - e_i e_j) (1 + e_i e_j) = 2$, we also have
\begin{equation}
\label{eq:binary_operation_by_conjugation}
\begin{split}
& \frac{1}{\sqrt{2}} (1 - e_k e_l) \cdot \frac{1}{\sqrt{2}} (1 + e_i e_j) \cdot \frac{1}{\sqrt{2}} (1 + e_k e_l) \\
&= \begin{cases} 
\dfrac{1}{\sqrt{2}} (1 + e_i e_j) 
& ( \textrm{if $| \{ i, j \} \cap \{ k , l \} | \equiv 0 \mod 2$} \, ) \\
\dfrac{1}{\sqrt{2}} (1 + e_i e_j e_k e_l) 
& ( \textrm{if $| \{ i, j \} \cap \{ k , l \} | \equiv 1 \mod 2$} \, ) 
\end{cases}.
\end{split}
\end{equation}
Define
\begin{equation}
\label{eq:definition_of_binary_operation}
\begin{split}
(e_i e_j) * (e_k e_l) 
&= \begin{cases} 
e_i e_j & ( \textrm{if $| \{ i, j \} \cap \{ k , l \} | \equiv 0 \mod 2$} \, ) \\
e_i e_j e_k e_l & ( \textrm{if $| \{ i, j \} \cap \{ k , l \} | \equiv 1 \mod 2$} \, )
\end{cases} \\
&= \begin{cases} 
e_i e_j & ( \textrm{if $(e_i e_j) (e_k e_l) = (e_k e_l) (e_i e_j)$} ) \\
e_i e_j e_k e_l & ( \textrm{if $(e_i e_j) (e_k e_l) = - (e_k e_l) (e_i e_j)$} )
\end{cases}.
\end{split}
\end{equation}
If we put $x = e_i e_j$ and $y = e_k e_l$, this is also written as
\begin{equation}
\label{eq:definition_of_binary_operation_2}
x * y = \frac{x - y x y}{2} + \frac{x y - y x}{2}
= \begin{cases}
x & ( \textrm{if $x y  = y x$} ) \\
x y & ( \textrm{if $x y = - y x$} )
\end{cases}.
\end{equation}
Using (\ref{eq:definition_of_binary_operation}), 
(\ref{eq:binary_operation_by_conjugation}) is written as
\begin{equation}
\frac{1}{\sqrt{2}} (1 - e_k e_l) \cdot \frac{1}{\sqrt{2}} (1 + e_i e_j) \cdot \frac{1}{\sqrt{2}} (1 + e_k e_l)
= \frac{1}{\sqrt{2}} (1 + (e_i e_j)*(e_k e_l) ).
\end{equation}
Explicitly, when $i < j$ and $k < l$, we have
\begin{equation}
(e_i e_j) * (e_k e_l) = 
\left\{
\begin{array}{rll} 
e_k e_j & (k < i = l < j)
& \begin{minipage}{80pt}\includegraphics[width=80pt]{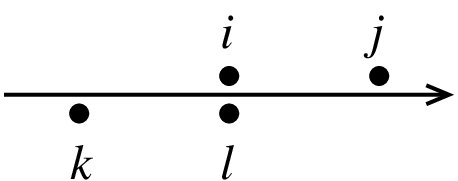}\end{minipage}\\
-e_l e_j & (i = k < l < j) 
& \begin{minipage}{80pt}\includegraphics[width=80pt]{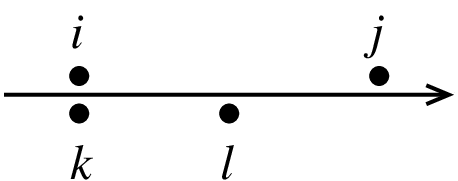}\end{minipage}\\
e_j e_l & (i = k < j < l)
& \begin{minipage}{80pt}\includegraphics[width=80pt]{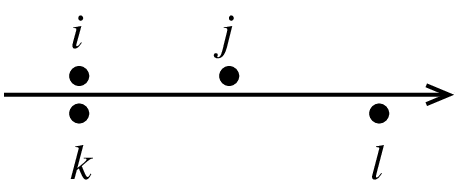}\end{minipage}\\
-e_k e_i & (k < i < j = l)
& \begin{minipage}{80pt}\includegraphics[width=80pt]{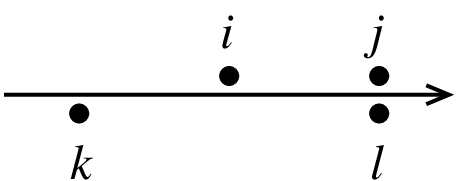}\end{minipage}\\
e_i e_k & (i < k < j = l)
& \begin{minipage}{80pt}\includegraphics[width=80pt]{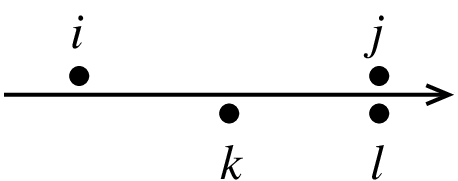}\end{minipage}\\
-e_i e_l & (i < j = k < l)
& \begin{minipage}{80pt}\includegraphics[width=80pt]{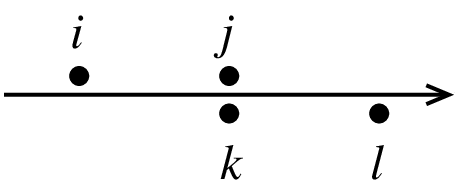}\end{minipage}\\
e_i e_j & \textrm{(otherwise)}
\end{array}
\right.
\end{equation}
In particular, $\{ (1 + e_i e_j)/\sqrt{2} \mid i \neq j \} \subset \mathrm{Spin}(n)$ 
is closed under conjugation. 
Thus we have the following.
\begin{proposition}
Let 
\[
Q'^r_{D_n} = \{ (1 + e_i e_j)/\sqrt{2} \mid i \neq j \}
= \{ (1 \pm e_i e_j)/\sqrt{2} \mid i < j \} \subset \mathrm{Spin}(n).
\]
$Q'^r_{D_n}$ is closed under conjugation. Thus $Q'^r_{D_n}$ is a quandle by conjugation.
\end{proposition}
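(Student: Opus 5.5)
The plan is to verify directly that for any two elements $x' = (1+e_ie_j)/\sqrt{2}$ and $y' = (1+e_ke_l)/\sqrt{2}$ of $Q'^r_{D_n}$ (with $i \neq j$, $k \neq l$), both $y'^{-1} x' y'$ and $y' x' y'^{-1}$ lie again in $Q'^r_{D_n}$. Once closure under conjugation is established, the quandle structure follows from the general remark in \S\ref{sec:quandles}: a subset of a group closed under conjugation is a quandle with $g*h = h^{-1}gh$.

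First, compute the inverse of $y'$. Since $(e_ke_l)^2 = -1$ by Lemma \ref{lem:symmetric_or_skew_symmetric}, we have $(1-e_ke_l)(1+e_ke_l) = 1 - (e_ke_l)^2 = 2$. Hence $y'^{-1} = (1-e_ke_l)/\sqrt{2}$, and this is also of the form $(1+e_le_k)/\sqrt{2}$ because $e_le_k = -e_ke_l$. In particular $Q'^r_{D_n}$ is closed under inverses. Also, each $(1+e_ie_j)/\sqrt{2} = \cos\frac{\pi}{4} + e_ie_j\sin\frac{\pi}{4}$ lies in $\mathrm{Spin}(n)$: it equals the product of the unit vectors $e_j$ and $(e_i - e_j)/\sqrt{2}$, up to the sign convention $e_j^2=-1$, and one can pick the order of the factors to get exactly this element.

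Next, identity (\ref{eq:binary_operation_by_conjugation}) gives $y'^{-1}x'y' = (1 + (e_ie_j)*(e_ke_l))/\sqrt{2}$. By (\ref{eq:definition_of_binary_operation}), $(e_ie_j)*(e_ke_l)$ is either $e_ie_j$ or $e_ie_je_ke_l$ with $|\{i,j\}\cap\{k,l\}|$ odd. In the odd case the intersection has exactly one element, so the product $e_ie_je_ke_l$ collapses to $\pm e_ae_b$ with $a \neq b$; the explicit table following (\ref{eq:definition_of_binary_operation_2}) lists these cases. So $y'^{-1}x'y' \in Q'^r_{D_n}$.

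For the other direction, observe that $y'x'y'^{-1} = (y'^{-1})^{-1}x'(y'^{-1})$, and $y'^{-1} = (1+e_le_k)/\sqrt{2}$ is again an element of $Q'^r_{D_n}$. The same computation therefore applies with the pair $(l,k)$ in place of $(k,l)$. The step that needs the most care is the collapse of $e_ie_je_ke_l$ to a single $\pm e_ae_b$ when the intersection has size one. This is a short check using $e_m^2 = -1$ and anticommutation, and it is already recorded in the table.
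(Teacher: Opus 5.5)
Your proof is correct and follows the paper's route. Closure comes from the conjugation identity (\ref{eq:binary_operation_by_conjugation}) together with the observation that $(e_ie_j)*(e_ke_l)$ is again of the form $\pm e_ae_b$ with $a\neq b$; your extra remarks (closure under inverses, which handles $y'x'y'^{-1}$, and the factorization $(1+e_ie_j)/\sqrt{2} = \frac{e_i-e_j}{\sqrt{2}}\, e_j$ showing membership in $\mathrm{Spin}(n)$) fill in points the paper leaves implicit.
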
 
By the one-to-one correspondence
\[
Q^r_{D_n} \ni e_i e_j \longleftrightarrow (1 + e_ie_j)/\sqrt{2} \in Q'^r_{D_n},
\]
$Q^r_{D_n} = \{ e_i e_j \mid i \neq j \} = \{ \pm e_i e_j \mid i < j \}$ 
has a quandle structure by (\ref{eq:definition_of_binary_operation}), 
or equivalently (\ref{eq:definition_of_binary_operation_2}).

For a Coxeter quandle $Q_W$, $*\alpha \in \mathrm{Inn}(Q_W)$ $(\alpha \in Q_W)$ 
is an involution. 
On the other hand, for $e_ie_j, \, x \in Q^r_{D_n}$, 
\[
(x * (e_i e_j))* (e_i e_j)
= \begin{cases} 
x & (\textrm{if $x e_ie_j = e_ie_j x$}) \\
x e_i e_j e_i e_j = -x & (\textrm{if $x e_ie_j = - e_ie_j x$})
\end{cases}
\]
means that $*(e_i e_j)$ is not an involution.
Thus $Q^r_{D_n} \cong Q'^r_{D_n}$ can not be a Coxeter quandle.

\subsection{Inner automorphism group of $Q^r_{D_n}$}
By a direct calculation, we have
\begin{equation}
\label{eq:action_of_Q_on_V}
\begin{split}
&(1 - e_i e_j)/\sqrt{2} \cdot e_i \cdot (1 + e_i e_j)/\sqrt{2} = -e_j, \\
&(1 - e_i e_j)/\sqrt{2} \cdot e_j \cdot (1 + e_i e_j)/\sqrt{2} = e_i, \\
&(1 - e_i e_j)/\sqrt{2} \cdot e_k \cdot (1 + e_i e_j)/\sqrt{2} = e_k 
\quad \textrm{($k \notin \{ i, j \}$)},
\end{split}
\end{equation}
for $i \neq j$.
This can also be observed by the fact that the right action of 
$(1 + e_i e_j)/\sqrt{2} \in Q'^r_{D_n}$ is the $-\pi/2$-rotation about the origin 
on $\mathbb{R} e_i \oplus \mathbb{R} e_j$
and the identity on its orthogonal complement.

Let $G$ be the group generated by $Q'^r_{D_n} = \{ (1 + e_i e_j)/\sqrt{2} \mid i \neq j \}$ 
in $\mathrm{Spin}(n)$.
By (\ref{eq:action_of_Q_on_V}), 
we see that $G$ preserves the set $\Omega = \{ \pm e_1, \cdots , \pm e_n\} \subset V$.
Since $\mathrm{SO}(n)$ acts faithfully on $V$ and $\Omega$ contains a basis of $V$, 
$\rho(G)$ acts on $\Omega$ faithfully. 
Thus we can regard $\rho(G) \leq \mathrm{Aut}(\Omega)$.

The natural projection 
$\Omega = \{ \pm e_1 , \cdots , \pm e_n \} \to 
\overline{\Omega} = \{ e_i , \cdots , e_n \} :
\pm e_i \mapsto e_i$ induces an action of $\rho(G)$ on $\overline{\Omega}$.
By (\ref{eq:action_of_Q_on_V}), $(1 + e_i e_j)/\sqrt{2}$ transpose $e_i$ and $e_j$ 
in $\overline{\Omega}$, thus $\rho(G)$ acts on $\overline{\Omega}$ 
as the full permutation group. Thus we have
\[
\begin{array}{ccc}
\mathrm{Aut} (\Omega) & \twoheadrightarrow & \mathrm{Aut} (\overline{\Omega}) \\
\rotatebox[origin=c]{-90}{$\geq$} & & \rotatebox[origin=c]{-90}{$\cong$} \\
\rho(G) & \twoheadrightarrow & S_n
\end{array}
\]
where $S_n$ is the symmetric group of degree $n$.
Let $K$ be the kernel of $\rho(G) \twoheadrightarrow S_n$.
Since any element of $K$ only changes signs of $\{ \pm e_1 , \cdots , \pm e_n\}$, 
$K$ can be regarded as a subgroup of $(\mathbb{Z} / 2 \mathbb{Z})^n$.
We see that $K \leq (\mathbb{Z} / 2 \mathbb{Z})^{n-1}$ 
since $\rho(G) \leq \mathrm{SO}(n)$ implies that there are even number of sign changes.
Moreover, $((1 + e_i e_j)/\sqrt{2} )^2 = e_i e_j$ only changes signs of 
$\pm e_i$ and $\pm e_j$, we conclude that $K = (\mathbb{Z} / 2 \mathbb{Z})^{n-1}$.
Thus we have a short exact sequence 
\begin{equation}
\label{eq:short_exact_sequence_for_rho(G)}
0 \to (\mathbb{Z} / 2 \mathbb{Z})^{n-1} \to \rho(G) \to S_n \to 1.
\end{equation}
In particular, the order of $\rho(G)$ is $2^{n-1} n!$.

As a subset of $\mathrm{SO}(n)$, 
any element of $\rho(G)$ can be written as 
$M(\sigma, \{ \varepsilon_i \} ) = (\varepsilon_i \delta_{i, \sigma(j)})_{i, j}$
(see (\ref{eq:definition_of_M_sigma_epsilon}))
where $\sigma \in S_n$ and $\varepsilon_i = \pm 1$ ($i = 1, \cdots , n$) 
satisfying $\det (\sigma , \{ \varepsilon_i \}) = 1$.
Comparing the cardinalities of the following sets, we obtain
\begin{equation}
\label{eq:rho_G}
\begin{split}
\rho(G) &= 
\left\{ M(\sigma , \{ \varepsilon_i \})
\mid \sigma \in S_n , \,\, \varepsilon_i = \pm 1 \, (i = 1, \cdots n) , \,\, 
\det (\sigma , \{ \varepsilon_i \}) = 1 \right\} \\
&= W_{B_n} \cap\mathrm{SO}(n).
\end{split}
\end{equation}
Combining the results of \S \ref{subsec:root_system_of_type_D_n}, 
we have the following inclusions:
\[
\begin{array}{ccccc}
 \mathrm{O}(n) & > & W_{B_n} & \geq & W_{D_n} \\
\rotatebox[origin=c]{-90}{$>$} & & \rotatebox[origin=c]{-90}{$\geq$} & & \\
\mathrm{SO}(n) & > & \rho(G)  \\
\end{array}
\]
We also have the following short exact sequences:
\begin{equation}
\label{eq:W_and_rho_G_as_kernels}
\begin{array}{ccccccccc}
1 & \to & W_{D_n} & \to & W_{B_n} & \xrightarrow{\mathrm{sgn}} & \{ \pm 1 \} & \to & 1 ,  \\
1 & \to & \rho(G) & \to & W_{B_n} & \xrightarrow{\mathrm{det}} & \{ \pm 1 \} & \to & 1 .  \\
\end{array}
\end{equation}

The right action $V \curvearrowleft \mathrm{Spin}(n)$ induces an action 
$V \wedge V \curvearrowleft \mathrm{Spin}(n)$.
This action reduces to the action of $\mathrm{PSO}(n)$, where 
\[
\mathrm{PSO}(n) = 
\begin{cases}
\mathrm{SO}(n)/ \{ \pm I \} & \textrm{( $n$ : even)} \\
\mathrm{SO}(n) & \textrm{( $n$ : odd)}.
\end{cases}
\]
When $n \geq 3$, the adjoint group of $\mathrm{Spin}(n)$ is $\mathrm{PSO}(n)$, 
thus the action $V \wedge V \curvearrowleft \mathrm{PSO}(n)$ is faithful. 
We identify $V \wedge V$ with $\mathrm{span}_{\mathbb{R}} \{ e_i e_j \mid i < j \}$.
For $i \neq j$ and $k \neq l$, by (\ref{eq:action_on_the_wedge_product}) and 
(\ref{eq:definition_of_binary_operation}),
we have
\[
\frac{1}{\sqrt{2}} (1 - e_k e_l) \cdot e_i e_j \cdot \frac{1}{\sqrt{2}} (1 + e_k e_l) 
= (e_i e_j) * (e_k e_l).
\]
Thus the right action of $\mathrm{Inn}(Q^r_{D_n})$ on $Q^r_{D_n}$ 
can be identified with the action of $\mathrm{Inn}(Q'^r_{D_n})$
on $\{ e_i e_j \mid i \neq j \}  \subset V \wedge V$.
Thus  $\mathrm{Inn}(Q^r_{D_n})$ is realized as a subgroup of $\mathrm{PSO}(n)$.

When $n = 2 k +1$, we have the following diagram:
\[
\xymatrix@C=1pt@R=1pt{
W_{D_{2k+1}} \ar@{>}[rrrrrr]^{1:1} & & & & & & \mathrm{Inn}(Q_{D_{2k+1}}) \\
\rotatebox[origin=c]{-90}{$\lneq$} & & & & & & \rotatebox[origin=c]{-90}{$\leq$} \\
W_{B_{2k + 1}} \ar@/^1.5pc/[rrrrrr]^{2:1} & < & \mathrm{O}(2k+1) & \xrightarrow{2:1} & \mathrm{PO}(2k+1) 
& > &  \overline{W_{B_{2k+1}}} \\
\rotatebox[origin=c]{-90}{$\gneq$}  & & \rotatebox[origin=c]{-90}{$>$} & & \rotatebox[origin=c]{-90}{$=$} & & \rotatebox[origin=c]{-90}{$\geq$} \\
\rho(G) \ar@/_1.5pc/[rrrrrr]_{1:1} & < & \mathrm{SO}(2k+1) & \xrightarrow{1:1} & \mathrm{PSO}(2k+1) 
& > & \mathrm{Inn}(Q^r_{D_{2 k + 1}}) \\
}
\]
Here $\overline{W_{B_{2k+1}}}$ is the image of $W_{B_{2k+1}}$ by the quotient map.
Since the cardinalities of $\mathrm{Inn}(Q^r_{D_{2k+1}})$, 
$\mathrm{Inn}(Q_{D_{2k+1}})$ and $\overline{W_{B_{2k+1}}}$ coincide, 
we conclude that $\mathrm{Inn}(Q^r_{D_{2k+1}}) = \mathrm{Inn}(Q_{D_{2k+1}})$.

When $n = 2 k$ and $k > 1$, we have the following diagram:
\[
\xymatrix@C=1pt@R=1pt{
W_{D_{2k}} \ar@{>}[rrrrrr]^{2:1} & & & & & & \mathrm{Inn}(Q_{D_{2k}}) \\
\rotatebox[origin=c]{-90}{$\lneq$} & & & & & & \rotatebox[origin=c]{-90}{$\leq$} \\
W_{B_{2k}} \ar@/^1.5pc/[rrrrrr]^{2:1} & < & \mathrm{O}(2k) & \xrightarrow{2:1} & \mathrm{PO}(2k) 
& > &  \overline{W_{B_{2k}}} \\
\rotatebox[origin=c]{-90}{$\gneq$}  & & \cup & & \rotatebox[origin=c]{-90}{$=$} & & \rotatebox[origin=c]{-90}{$\geq$} \\
\rho(G) \ar@/_1.5pc/[rrrrrr]_{2:1} & < & \mathrm{SO}(2k) & \xrightarrow{2:1} & \mathrm{PSO}(2k) 
& > & \mathrm{Inn}(Q^r_{D_{2k}}) \\
}
\]
Recall that $W_{D_{2k}}$ and $\rho (G)$ are 
the kernels of the maps $\mathrm{sgn}$ and $\det$ respectively 
(\ref{eq:W_and_rho_G_as_kernels}).
These maps reduces to maps from $\mathrm{PO}(2k)$ and we have the following short 
exact sequences 
\begin{equation}
\begin{array}{ccccccccc}
1 & \to & \mathrm{Inn}(Q_{D_{2k}}) & \to & \overline{W_{B_{2k}}} & \xrightarrow{\mathrm{sgn}} & \{ \pm 1 \} & \to & 1 ,  \\
1 & \to & \mathrm{Inn}(Q^r_{D_{2k}}) & \to & \overline{W_{B_{2k}}} & \xrightarrow{\mathrm{det}} & \{ \pm 1 \} & \to & 1 .  \\
\end{array}
\end{equation}
The isomorphism $W_{B_{2k}} \cong (\mathbb{Z} / 2 \mathbb{Z})^{2k} \rtimes S_{2k}$ and 
its explicit description in $\mathrm{O}(2k)$ enable us to conclude that
\begin{equation}
\label{eq:description_of_Inn_Q_D_2k_and_Q^r_D_2k}
\begin{split}
&\overline{W_{B_{2k}}} \cong (\mathbb{Z} / 2 \mathbb{Z})^{2k-1} \rtimes S_{2k}, \\
&\mathrm{Inn}(Q_{D_{2k}}) \cong (\mathbb{Z} / 2 \mathbb{Z})^{2k-2} \rtimes S_{2k}, \\
&\mathrm{Inn}(Q_{D_{2k}}) \cap \mathrm{Inn}(Q^r_{D_{2k}}) 
\cong (\mathbb{Z} / 2 \mathbb{Z})^{2k-2} \rtimes A_{2k},
\end{split}
\end{equation}
where $A_n$ is the alternating group of degree $n$.
We will prove the following in the next subsection \S \ref{subsec:proof_of_no_S_n}.
\begin{proposition}
\label{prop:no_S_n}
If $k \geq 3$, 
$\mathrm{Inn} (Q^r_{D_{2k}}) \cong (W_{B_{2k}} \cap \mathrm{SO}(2k))  / \{ \pm 1 \}$ 
does not contain a subgroup isomorphic to $S_{2k}$.
\end{proposition}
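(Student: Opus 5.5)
Let $H = (W_{B_{2k}} \cap \mathrm{SO}(2k))/\{\pm 1\} \cong \mathrm{Inn}(Q^r_{D_{2k}})$. The plan is to show that the extension
\[
1 \to N \to H \xrightarrow{\ \pi\ } S_{2k} \to 1, \qquad N = \{ M(\mathrm{id}, \{\varepsilon_i\}) \mid \textstyle\prod \varepsilon_i = 1 \}/\{\pm 1\} \cong (\mathbb{Z}/2\mathbb{Z})^{2k-2},
\]
coming from (\ref{eq:short_exact_sequence_for_rho(G)}) does not split. Then we show that any subgroup isomorphic to $S_{2k}$ would be a complement.

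\textbf{Reduction to a splitting.} Suppose $S \leq H$ with $S \cong S_{2k}$. Then $S \cap N$ is a normal elementary abelian $2$-subgroup of $S$. Since $2k \geq 6$, the only normal subgroups of $S_{2k}$ are $1$, $A_{2k}$ and $S_{2k}$, so $S \cap N = 1$. Hence $\pi|_S$ is injective, and by cardinality it is an isomorphism $S \to S_{2k}$. For each transposition $(i\,j)$ let $t_{ij} \in S$ be the unique element with $\pi(t_{ij}) = (i\,j)$. These satisfy $t_{ij}^2 = 1$, and $t_{ij}$ commutes with $t_{lm}$ whenever $\{i,j\} \cap \{l,m\} = \emptyset$.

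We never use that $t_{ij}$ is a transposition \emph{of the abstract group} $S$. This avoids the outer automorphism of $S_6$, so the case $k = 3$ needs no special treatment.

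\textbf{Matrix computation.} Lift $t_{12}$ to $A = M((1\,2), \{\varepsilon_i\}) \in W_{B_{2k}} \cap \mathrm{SO}(2k)$. Then $\det A = 1$ forces $\prod_i \varepsilon_i = -1$. A direct computation gives
\[
A^2 = \mathrm{diag}(\varepsilon_1\varepsilon_2, \varepsilon_1\varepsilon_2, 1, \ldots, 1).
\]
Since $t_{12}^2 = 1$ in $H$, we need $A^2 = \pm I$. Because $2k \geq 3$ there is a diagonal entry equal to $1$, so $A^2 = I$ and $\varepsilon_1 = \varepsilon_2$. Therefore $\prod_{i \geq 3} \varepsilon_i = -1$.

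Next, for distinct $l, m \geq 3$, lift $t_{lm}$ to $B = M((l\,m), \{\eta_i\})$. The commutator $ABA^{-1}B^{-1}$ is diagonal: it equals $\eta_1\eta_2$ on coordinates $1,2$, equals $\varepsilon_l\varepsilon_m$ on coordinates $l,m$, and equals $1$ on all other coordinates. Since $2k \geq 6$, there is at least one other coordinate, so the condition that the commutator be $\pm I$ forces $\varepsilon_l = \varepsilon_m$.

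Letting $l, m$ range over all pairs in $\{3, \ldots, 2k\}$, we get $\varepsilon_3 = \cdots = \varepsilon_{2k} = d$. Then $\prod_{i \geq 3} \varepsilon_i = d^{2k-2} = 1$, which contradicts $\prod_{i \geq 3} \varepsilon_i = -1$. Hence no such $S$ exists.

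\textbf{Main obstacle.} The delicate point is working modulo $\{\pm 1\}$: every identity in $H$ only holds up to $\pm I$ in $\mathrm{SO}(2k)$. Each such identity is made exact by locating a coordinate where the relevant diagonal matrix is forced to equal $1$. This is exactly where $k \geq 3$ is needed, since we need a coordinate outside $\{1,2,l,m\}$. One should also check that the contradiction does not depend on the choice of lift $\pm A$: replacing $A$ by $-A$ changes $\prod_{i \geq 3} \varepsilon_i$ by $(-1)^{2k-2} = 1$, so the parity condition is well defined.
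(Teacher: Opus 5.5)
Your proof is correct and follows essentially the same route as the paper. Both arguments use the normal subgroups of $S_{2k}$ to make $\pi|_S$ an isomorphism, lift $(1\,2)$ to $\pm M((1\,2),\{\varepsilon_i\})$, use order two and $\det=1$ to get $\varepsilon_1=\varepsilon_2$ and $\prod_{i\ge 3}\varepsilon_i=-1$, and then contradict the requirement that lifts of permutations fixing $1,2$ leave this lift unchanged under conjugation. Your explicit commutator with lifts of $(l\,m)$ is the paper's conjugation lemma specialized to transpositions, and it usefully spells out the $\pm I$ bookkeeping (the coordinate outside $\{1,2,l,m\}$) that the paper leaves implicit.
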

However, 
$\mathrm{Inn} (Q_{D_{2k}}) \cong \overline{W_{D_{2k}}} 
\cong (\mathbb{Z}/2\mathbb{Z})^{2k-2} \rtimes S_{2k}$ contains 
a subgroup isomorphic to $S_{2k}$.
We conclude that $\mathrm{Inn}(Q_{D_{2k}}) \ncong  \mathrm{Inn}(Q^r_{D_{2k}})$ for 
$k \geq 3$.

In summary, we have the following.
\begin{theorem}
\label{thm:inner_automorphism_group_of_Q^r_D_n}
Let $Q_{D_n}$ be the Coxeter quandle of type $D_n$ and 
$Q^r_{D_n}$ be the rotational $D_n$ quandle.
If $n$ is odd, we have $\mathrm{Inn} (Q^r_{D_n}) \cong \mathrm{Inn} (Q_{D_n})
\cong (\mathbb{Z}/2 \mathbb{Z})^{n-1} \rtimes S_n$.

If $n$ is even and $n \geq 4$, $\mathrm{Inn} (Q^r_{D_n})$ and $\mathrm{Inn} (Q_{D_n})$ 
have the same order, and have isomorphic index $2$ subgroups.
But $\mathrm{Inn} (Q^r_{D_n}) \ncong \mathrm{Inn} (Q_{D_n})$ for $n \geq 6$.
\end{theorem}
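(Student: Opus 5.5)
The theorem is a summary, so my plan is to assemble it from the diagrams and facts already set up in this section, with Proposition \ref{prop:no_S_n} supplying the non-isomorphism.

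\emph{Odd $n=2k+1$.} Since $-1\notin W_{D_n}$ (Proposition \ref{prop:contains_-1}), Lemma \ref{lem:inner_automorphism_group_of_coxeter_quandle} gives $\mathrm{Inn}(Q_{D_n})\cong W_{D_n}\cong(\mathbb{Z}/2\mathbb{Z})^{n-1}\rtimes S_n$. On the rotational side, $\mathrm{PSO}(n)=\mathrm{SO}(n)$, so $\mathrm{Inn}(Q^r_{D_n})$ is the image of $\rho(G)$. This image is injective, and $\rho(G)$ has order $2^{n-1}n!$ by (\ref{eq:short_exact_sequence_for_rho(G)}).

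Both groups sit inside $\mathrm{PO}(n)$ as subgroups of $\overline{W_{B_n}}$. Because $-1\in W_{B_n}$ and $\mathrm{PO}(n)=\mathrm{O}(n)/\{\pm 1\}$, the group $\overline{W_{B_n}}$ also has order $2^{n-1}n!$. The order count then forces $\mathrm{Inn}(Q^r_{D_n})=\overline{W_{B_n}}=\mathrm{Inn}(Q_{D_n})$ as subgroups, and the stated isomorphism follows.

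The one point to justify is that $\mathrm{Inn}(Q^r_{D_n})$ is exactly the image of $\rho(G)$ in $\mathrm{PSO}(n)$. The paper identified the action of $\mathrm{Inn}(Q'^r_{D_n})$ with the conjugation action on $\{e_ie_j\}\subset V\wedge V$. When $n\ge 3$ this action is faithful on $V\wedge V$, so faithfulness on the spanning set $\{e_ie_j\}$ identifies the inner automorphism group with the image of $G$.

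\emph{Even $n=2k\ge4$.} The two exact sequences display both $\mathrm{Inn}(Q_{D_n})$ and $\mathrm{Inn}(Q^r_{D_n})$ as index-$2$ subgroups of $\overline{W_{B_{2k}}}$, as kernels of $\mathrm{sgn}$ and $\det$ respectively. Hence both have order $2^{2k-2}(2k)!$. This needs both maps to descend to $\mathrm{PO}(2k)$: for $n$ even, $-1$ has sign $+1$ and determinant $+1$. It also needs both maps to be surjective on $\overline{W_{B_{2k}}}$, which holds because a reflection $s_{e_i}$ has $\mathrm{sgn}=\det=-1$.

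The intersection of the two kernels has index $2$ in each. Its isomorphism type, $(\mathbb{Z}/2\mathbb{Z})^{2k-2}\rtimes A_{2k}$, is recorded in (\ref{eq:description_of_Inn_Q_D_2k_and_Q^r_D_2k}). It is the same subgroup of both, so the two groups have isomorphic (indeed equal) index-$2$ subgroups.

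\emph{Non-isomorphism for $n\ge 6$.} $\mathrm{Inn}(Q_{D_{2k}})\cong(\mathbb{Z}/2\mathbb{Z})^{2k-2}\rtimes S_{2k}$ contains a copy of $S_{2k}$, namely the permutation matrices modulo $\pm1$. These matrices inject because no nontrivial permutation matrix is $-1$, and they lie in $W_{D_{2k}}$ since all their signs are $+1$. By contrast, Proposition \ref{prop:no_S_n} says $\mathrm{Inn}(Q^r_{D_{2k}})$ contains no such subgroup when $k\ge3$, so the two groups are not isomorphic.

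The main obstacle is Proposition \ref{prop:no_S_n} itself, whose proof is deferred. To prove it, I would argue as follows. Suppose $H\le(W_{B_{2k}}\cap\mathrm{SO}(2k))/\{\pm1\}$ with $H\cong S_{2k}$. Compose with the projection to $S_{2k}$; its kernel is the $2$-group $(\mathbb{Z}/2\mathbb{Z})^{2k-1}/\{\pm1\}$. Since $S_{2k}$ has no nontrivial normal $2$-subgroup for $2k\ge6$, the composite is injective, hence an isomorphism. So $H$ is a complement, and it must lift transpositions to elements of order $2$. A transposition lifts to a signed permutation $M(\tau,\varepsilon)$ of determinant $+1$, so it must carry an odd number of sign flips. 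Such an element squares to $\pm1$ only in restricted sign patterns, and I would exploit this. Concretely, I would show that the Coxeter relations among lifts of $(i\,i{+}1)$ cannot all hold modulo $\pm1$, using parity of sign counts on fixed coordinates. The hypothesis $2k\ge 6$ should enter by giving enough fixed coordinates to force a contradiction.
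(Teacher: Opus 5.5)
Your proposal is correct and follows the paper's own assembly of the theorem: order counting inside $\overline{W_{B_n}}\subset\mathrm{PO}(n)$ for odd $n$, the kernels of $\mathrm{sgn}$ and $\det$ on $\overline{W_{B_{2k}}}$ for even $n$, and Proposition~\ref{prop:no_S_n} against the permutation-matrix copy of $S_{2k}$ in $\mathrm{Inn}(Q_{D_{2k}})$. Your sketch of that proposition also goes through, and it is a Coxeter-generator variant of the paper's argument, which conjugates by lifts of all permutations fixing $1,2$: the lift of $(1\,2)$ must commute modulo $\pm1$ with the lifts of $(j\,j{+}1)$, $j\ge3$, necessarily with sign $+$ since some coordinate is fixed by both, which forces $\varepsilon_3=\cdots=\varepsilon_{2k}$, and together with $\varepsilon_1=\varepsilon_2$ this gives $\det=-1$, a contradiction.
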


When $n = 4$, a GAP calculation shows that 
$\mathrm{Inn}(Q_{D_{4}}) \cong  \mathrm{Inn}(Q^r_{D_{4}})$.
When $n = 2$, since $\dim V \wedge V = 1$, $SO(2)$ acts trivially on $V \wedge V$.
Thus we have $\mathrm{Inn} (Q^r_{D_2}) \cong \{1 \}$.

By \ref{eq:description_of_Inn_Q_D_2k_and_Q^r_D_2k}, 
we have two short exact sequences:
\[
\begin{array}{ccccccccc}
1 & \to & (\mathbb{Z} / 2 \mathbb{Z})^{2k-2} \rtimes A_{2k}
& \to & \mathrm{Inn}(Q_{D_{2k}}) & \xrightarrow{\mathrm{det}} & \{ \pm 1 \} & \to & 1 ,  \\
1 & \to & (\mathbb{Z} / 2 \mathbb{Z})^{2k-2} \rtimes A_{2k}
& \to & \mathrm{Inn}(Q^r_{D_{2k}}) & \xrightarrow{\mathrm{sgn}} & \{ \pm 1 \} & \to & 1 .  \\
\end{array}
\]
Thus $\mathrm{Inn}(Q_{D_{2k}})$ and $\mathrm{Inn}(Q^r_{D_{2k}})$ are written as 
semi-direct products of $(\mathbb{Z} / 2 \mathbb{Z})^{2k-2} \rtimes A_{2k}$ by 
$\mathbb{Z}/2\mathbb{Z}$ but not isomorphic for $k \geq 3$. 

\subsection{Proof of Proposition \ref{prop:no_S_n}}
\label{subsec:proof_of_no_S_n}
In this subsection, we will prove Proposition \ref{prop:no_S_n}.
First of all, we collect some formulas on matrices of the form
$M(\sigma, \{ \varepsilon_i \} ) = (\varepsilon_i \delta_{i, \sigma(j)})_{i, j}$.
As an example, when $\sigma = \begin{pmatrix} 1 & 2 & 3 \end{pmatrix}$ and 
$\varepsilon_1 = \varepsilon_2 = 1$, $\varepsilon_3 = -1$, we have
$M(\sigma, \{ \varepsilon_i \} ) 
= \begin{pmatrix} 
0 & 0 & 1 \\ 
1 & 0 & 0 \\ 
0 & -1 & 0 
\end{pmatrix}$.

\begin{lemma}
\begin{align}
\label{eq:inverse}
&M(\sigma, \{ \varepsilon_i \})^{-1} = (\varepsilon_i \delta_{i, \sigma(j)})_{i, j}^{-1}
= (\varepsilon_j \delta_{j, \sigma(i)})_{i, j}, \\
\label{eq:multiplication}
&M(\sigma, \{ \varepsilon_i \}) M(\tau, \{ \eta_i \})
= (\varepsilon_i \eta_{\sigma^{-1}(i)} \delta_{i, \sigma \tau (j)}), \\
\label{eq:conjugate}
&M(\tau, \{ \eta_i \})^{-1} M(\sigma, \{ \varepsilon_i \}) M(\tau, \{ \eta_i \})
= (\varepsilon_{\tau(i)} \eta_{\tau(i)} \eta_{\sigma^{-1}\tau(i)} \delta_{i, \tau^{-1} \sigma \tau(j)}).
\end{align}
\end{lemma}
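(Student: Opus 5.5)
These three identities are direct index computations with the matrix units. The plan is to write $M(\sigma,\{\varepsilon_i\}) = D_\varepsilon P_\sigma$. Here $D_\varepsilon = \mathrm{diag}(\varepsilon_1,\dots,\varepsilon_n)$ and $P_\sigma = (\delta_{i,\sigma(j)})_{i,j}$ is the permutation matrix with $P_\sigma e_j = e_{\sigma(j)}$. All three formulas then follow from $P_\sigma P_\tau = P_{\sigma\tau}$, $P_\sigma^{-1} = P_\sigma^{T} = P_{\sigma^{-1}}$, and the commutation rule $P_\sigma D_\eta P_\sigma^{-1} = D_{\eta\circ\sigma^{-1}}$. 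The last rule holds because $P_\sigma D_\eta P_\sigma^{-1} e_{\sigma(j)} = \eta_j e_{\sigma(j)}$, so the $i$-th diagonal entry is $\eta_{\sigma^{-1}(i)}$.

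For the inverse (\ref{eq:inverse}), the matrix is orthogonal: it is a product of a signed diagonal and a permutation matrix. So the inverse is the transpose. The transpose has $(i,j)$ entry $\varepsilon_j\delta_{j,\sigma(i)}$, which is the claim. For the product (\ref{eq:multiplication}), I compute
\[
D_\varepsilon P_\sigma D_\eta P_\tau = D_\varepsilon D_{\eta\circ\sigma^{-1}} P_{\sigma\tau}.
\]
Its $(i,j)$ entry is $\varepsilon_i\eta_{\sigma^{-1}(i)}\delta_{i,\sigma\tau(j)}$. Equivalently, one can expand $\sum_m \varepsilon_i\delta_{i,\sigma(m)}\eta_m\delta_{m,\tau(j)}$; only $m=\sigma^{-1}(i)$ survives.

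For the conjugate (\ref{eq:conjugate}), I apply (\ref{eq:multiplication}) twice rather than expanding a triple sum. First, (\ref{eq:inverse}) gives $M(\tau,\{\eta_i\})^{-1} = M(\tau^{-1},\{\eta'_i\})$ with $\eta'_i = \eta_{\tau(i)}$. This holds because $\eta_j\delta_{j,\tau(i)}$ equals $\eta_{\tau(i)}\delta_{i,\tau^{-1}(j)}$. Next, (\ref{eq:multiplication}) gives
\[
M(\sigma,\{\varepsilon_i\})M(\tau,\{\eta_i\}) = M(\sigma\tau,\{\varepsilon_i\eta_{\sigma^{-1}(i)}\}).
\]
Applying (\ref{eq:multiplication}) once more, with left factor $M(\tau^{-1},\{\eta'_i\})$, yields the permutation $\tau^{-1}\sigma\tau$. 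The sign in row $i$ is $\eta'_i$ times the second factor's sign evaluated at $(\tau^{-1})^{-1}(i) = \tau(i)$. That is,
\[
\eta_{\tau(i)}\,\varepsilon_{\tau(i)}\,\eta_{\sigma^{-1}\tau(i)},
\]
matching the statement.

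The only real hazard is bookkeeping of indices in the last step, in particular tracking $\sigma^{-1}$ versus $\sigma$ in the subscript. I would double-check it against the $3\times 3$ example given just before the lemma, with a transposition as $\tau$.
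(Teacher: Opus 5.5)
Your proposal is correct and takes essentially the same approach as the paper. The inverse is the transpose by orthogonality, and the product is the index sum in which only $m=\sigma^{-1}(i)$ survives; your factorization $D_\varepsilon P_\sigma$ simply repackages that sum. For the conjugate, the paper sums over $k$ once more, using the entries of $M(\tau,\{\eta_i\})^{-1}$ and of $M(\sigma,\{\varepsilon_i\})M(\tau,\{\eta_i\})$; you instead rewrite the inverse as $M(\tau^{-1},\{\eta_{\tau(i)}\})$ and reapply the product formula, a cosmetic difference that gives the same $\eta_{\tau(i)}\varepsilon_{\tau(i)}\eta_{\sigma^{-1}\tau(i)}\delta_{i,\tau^{-1}\sigma\tau(j)}$.
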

\begin{proof}
Since $M(\sigma, \{ \varepsilon_i \} )$ is an orthogonal matrix, 
we obtain (\ref{eq:inverse}).
For (\ref{eq:multiplication}), 
$M(\sigma, \{ \varepsilon_i \}) M(\tau, \{ \eta_i \})
= (\varepsilon_i \delta_{i, \sigma(k)} \eta_{k} \delta_{k,\tau (j)})
= (\varepsilon_i \eta_{\sigma^{-1}(i)} \delta_{\sigma^{-1}(i),\tau (j)})
= (\varepsilon_i \eta_{\sigma^{-1}(i)} \delta_{i,\sigma\tau (j)})$.
For (\ref{eq:conjugate}), 
$M(\tau, \{ \eta_i \})^{-1} M(\sigma, \{ \varepsilon_i \}) M(\tau, \{ \eta_i \})
= (\eta_k \delta_{k,\tau(i)}  \varepsilon_k \eta_{\sigma^{-1}(k)} \delta_{k,\sigma\tau (j)})
= (\eta_{\tau(i)} \varepsilon_{\tau(i)} \eta_{\sigma^{-1} \tau(i)} \delta_{\tau(i),\sigma\tau (j)})
= (\eta_{\tau(i)} \varepsilon_{\tau(i)} \eta_{\sigma^{-1} \tau(i)} \delta_{i,\tau^{-1}\sigma\tau (j)})
$.
\end{proof}

\begin{lemma}
\label{lem:permute_signs}
Let $\sigma$ be a transposition $\begin{pmatrix} 1 & 2 \end{pmatrix}$.
Let $\tau$ be a permutation fixing $1$ and $2$.
Then 
\[
M(\tau, \{ \eta_i \})^{-1} M(\sigma, \{ \varepsilon_i \}) M(\tau, \{ \eta_i \})
= \begin{pmatrix} 
0 & \varepsilon_1 \eta_1 \eta_2 & & & O \\
\varepsilon_2 \eta_1 \eta_2 & 0 & & &  \\
& & \varepsilon_{\tau(3)}& & \\
& & & \ddots \\
O & & & &\varepsilon_{\tau(n)}
\end{pmatrix}.
\]
\end{lemma}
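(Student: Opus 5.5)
The plan is to apply the conjugation formula (\ref{eq:conjugate}) directly, with $\sigma = (1\,2)$ and $\tau$ fixing $1$ and $2$, and then read off the entries one by one. By (\ref{eq:conjugate}), the conjugate is the matrix with $(i,j)$-entry
\[
\varepsilon_{\tau(i)} \eta_{\tau(i)} \eta_{\sigma^{-1}\tau(i)} \delta_{i, \tau^{-1}\sigma\tau(j)} .
\]
First I will identify the underlying permutation $\tau^{-1}\sigma\tau$. Since $\tau$ fixes $1$ and $2$, it also preserves $\{3,\dots,n\}$. Hence $\tau^{-1}\sigma\tau = (\tau^{-1}(1)\ \tau^{-1}(2)) = (1\,2) = \sigma$. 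So the nonzero entries are at positions $(1,2)$, $(2,1)$, and $(i,i)$ for $i \geq 3$, which gives the block shape claimed in the statement.

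Next I will compute the signs.

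For $i = 1$: we have $\tau(1) = 1$ and $\sigma^{-1}\tau(1) = 2$. The entry at $(1,2)$ is therefore $\varepsilon_1\eta_1\eta_2$.

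For $i = 2$: similarly $\tau(2) = 2$ and $\sigma^{-1}(2) = 1$. The entry at $(2,1)$ is therefore $\varepsilon_2\eta_2\eta_1$.

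For $i \geq 3$: we have $\tau(i) \geq 3$, so $\sigma^{-1}\tau(i) = \tau(i)$. The diagonal entry is then $\varepsilon_{\tau(i)}\eta_{\tau(i)}^2 = \varepsilon_{\tau(i)}$, since $\eta_{\tau(i)} = \pm 1$.

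This reproduces the displayed matrix exactly. There is no real obstacle here. The only point needing care is index bookkeeping in (\ref{eq:conjugate}): specifically, checking that $\sigma^{-1}\tau(i)$ is evaluated as $\sigma^{-1}$ applied to $\tau(i)$, and using that $\tau$ commutes with $\sigma$ because it preserves the set $\{1,2\}$ pointwise.
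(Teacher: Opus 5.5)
Your proposal is correct and matches the paper's proof: both substitute $\sigma=(1\,2)$ and $\tau$ into the conjugation formula (\ref{eq:conjugate}), use $\tau^{-1}\sigma\tau=\sigma$, and read off the entries from $\tau(1)=1$, $\sigma^{-1}\tau(1)=2$, $\tau(2)=2$, $\sigma^{-1}\tau(2)=1$ and $\sigma^{-1}\tau(i)=\tau(i)$ for $i\geq 3$. Where the paper uses $\tau^{-1}\sigma\tau=\sigma$ implicitly (by writing $\delta_{i,\sigma(j)}$ directly) and leaves $\eta_{\tau(i)}^2=1$ unstated, you justify both explicitly.
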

\begin{proof}
We have $M(\tau, \{ \eta_i \})^{-1} M(\sigma, \{ \varepsilon_i \}) M(\tau, \{ \eta_i \})
= (\varepsilon_{\tau(i)} \eta_{\tau(i)} \eta_{\sigma^{-1}\tau(i)} \delta_{i, \sigma(j)})$
by (\ref{eq:conjugate}).
By the assumption, $\tau(1) = 1$, $\sigma^{-1}\tau(1) = 2$, 
$\tau(2) = 2$, $\sigma^{-1}\tau(2) = 1$,
and $\sigma^{-1}\tau(i) = \tau(i)$  for $i = 3, \cdots , n$, 
we obtain the assertion. 
\end{proof}

\begin{proof}[Proof of Proposition \ref{prop:no_S_n}]
Assume that there exists an injective homomorphism 
$i : S_{2k} \hookrightarrow (W_{B_{2k}} \cap \mathrm{SO}(2k))  / \{ \pm 1 \}$ 
when $k \geq 3$.

The short exact sequence (\ref{eq:short_exact_sequence_for_rho(G)}) 
(see also (\ref{eq:rho_G})) reduces to the short exact sequence
\begin{equation}
\label{eq:short_exact_sequence_for_the_quotient}
0 \to (\mathbb{Z} / 2 \mathbb{Z})^{2k-2} \to 
(W_{B_{2k}} \cap \mathrm{SO}(2k))/\{ \pm 1 \} \xrightarrow[]{\pi} S_{2k} \to 1.
\end{equation}
Let $\pi : (W_{B_{2k}} \cap \mathrm{SO}(2k))  / \{ \pm 1 \} \to S_{2k}$ be the
surjective homomorphism in (\ref{eq:short_exact_sequence_for_the_quotient}).
Explicitly, we have $\pi( \pm M(\sigma, \{ \varepsilon_i \})) = \sigma$.
It is elementary to show that any normal subgroup of $S_n$ is isomorphic to 
$\{1\}$, $A_n$ or $S_n$ for $n \geq 5$ since $A_n$ is simple.
Thus $\mathrm{Ker} (\pi \circ i)$ is $\{1\}$, $A_{2k}$ or $S_{2k}$.
If $\mathrm{Ker} (\pi \circ i) = A_{2k}$, or $S_{2k}$, 
then $i(A_{2k})$ is included in $(\mathbb{Z} / 2 \mathbb{Z})^{2k-2}$.
This contradicts the fact that $A_{2k}$ is non-abelian.
Thus $\mathrm{Ker} (\pi \circ i) = \{ 1 \}$, 
that is $\pi |_{i(S_{2k})} : i(S_{2k}) \to S_{2k}$ is an isomorphism.

Let $\sigma = \begin{pmatrix} 1 & 2 \end{pmatrix} \in S_{2k}$.
Then $(\pi |_{i(S_{2k})})^{-1} ( \sigma ) \in i (S_{2k})$ is  written as 
$\pm M(\sigma, \{ \varepsilon_i \})$ 
for some $\varepsilon_i = \pm 1$ ($i = 1, \cdots , 2k$);
\[
\pm M(\sigma, \{ \varepsilon_i \}) 
= \pm
\begin{pmatrix} 
0 & \varepsilon_1 & & & O \\
\varepsilon_2 & 0 & & &  \\
& & \varepsilon_{3}& & \\
& & & \ddots \\
O & & & &\varepsilon_{2k}
\end{pmatrix}.
\]
Since $M(\sigma, \{ \varepsilon_i \})$ has order 2, 
we have $\varepsilon_1 = \varepsilon_2 = 1$ or $\varepsilon_1 = \varepsilon_2 = -1$. 
Since $\det M(\sigma, \{ \varepsilon_i \}) = 1$, 
there is an odd number of $\varepsilon_3, \cdots , \varepsilon_{2k}$ equal to $1$
($-1$ respectively).
For any permutation $\tau \in S_{2k}$ fixing $1$ and $2$, 
$(\pi |_{i(S_{2k})})^{-1} ( \tau ) \in i (S_{2k})$ is  written as 
$\pm M(\tau, \{ \eta_i \})$ 
for some $\eta_i = \pm 1$ ($i = 1, \cdots , 2k$).
By Lemma \ref{lem:permute_signs}, 
the conjugation of $M(\sigma, \{ \varepsilon_i \})$ by $M(\tau, \{ \eta_i \})$ 
permutes $\varepsilon_3, \cdots , \varepsilon_{2k}$.
Thus the inverse image $(\pi |_{i(S_{2k})})^{-1} ( \{ \sigma \} ) \subset  i (S_{2k})$
has more than 1 elements. 
This contradicts the fact that 
$\pi |_{i(S_{2k})} : i(S_{2k}) \to S_{2k}$ is an isomorphism.
\end{proof}

The proof does not work for $k = 2$.
In fact, the subgroup of $W_{B_4} \cap \mathrm{SO}(4)$ generated by 
$\begin{pmatrix} 
0 & -1 & 0 & 0 \\
1 & 0 & 0 & 0 \\
0 & 0 & 1 & 0 \\
0 & 0 & 0 & 1
\end{pmatrix}$
and 
$\begin{pmatrix} 
0 & 0 & -1 & 0 \\
0 & 1 & 0 & 0 \\
1 & 0 & 0 & 0 \\
0 & 0 & 0 & 1
\end{pmatrix}$
is isomorphic to $S_4$. 
But the image of this subgroup under $\pi : W_{B_4} \cap \mathrm{SO}(4) \to S_4$ 
is isomorphic to $S_3$. 
Thus the homomorphism $\pi |_{i(S_{2k})} : i(S_{2k}) \to S_{2k}$ in the proof 
may not be an isomorphism.

\section{Vendramin's classification}
\label{sec:vendramin's_classification}
Vendramin classified connected quandles of order $\leq 35$ in \cite{Vendramin}.
The list was expanded in \cite{RIG} for connected quandles of order $\leq 47$, 
Using Vendramin's GAP package RIG, 
we determined $Q_{W}$, $DQ_{W}$, $Q^r_{D_n}$ in the Vendramin's list.
The results are collected in Table \ref{table:small_quandles}.
In the table, $C_n$ is the cyclic group, $A_n$ is the alternating group, 
$S_n$ is the symmetric group, and $A : B$ is a semi-direct product $A \rtimes B$.
We remark that $|Q_{A_n}| = |DQ_{A_n}|/2 = n(n+1)/2$ 
and $|Q_{D_n}| = |DQ_{D_n}|/2 = n(n-1)/2$
since the order of $DQ_W$ coincides with the number of roots of $W$.
Among them, 
$Q_{6,1} \cong Q_{A_3} = Q_{D_3}$, $Q_{6,2} \cong Q^r_{D_3}$, 
$Q_{12,8} \cong Q_{D_4}$, $Q_{12,9} \cong Q^r_{D_4}$
are described as \emph{Galkin quandles} in \cite[Table 1]{CEHSY}, 
which were introduced by Clark, Elhamdadi, Hou, Saito and Yeatman.

We also collect the inner automorphism groups of $Q_W$, $DQ_W$ and $\Phi_W$
for exceptional finite Coxeter groups in Table \ref{table:exceptional}. 
We remark that $\mathrm{Inn} (\Phi_{W}) \cong W$ 
by Lemma \ref{lem:inner_automorphism_group_of_coxeter_rack}.

%
%

\begin{table}
\caption{$Q_W$, $DQ_W$, $Q^r_{D_n}$ in Vendramin's classification.}
\begin{tabular}{l|l|l|l}
order & name & description & $\mathrm{Inn}(Q)$ (GAP's structure description)\\
\hline
6 & $Q_{6,1}$ &  $Q_{A_3} = Q_{D_3}$ & $S_4$ \\ \cline{2-4}
  & $Q_{6,2}$ &  $Q^r_{D_3}$ & $S_4$ \\ \hline
10 & $Q_{10,1}$ & $Q_{A_4}$ & $S_5$ \\ \hline 
12 & $Q_{12,1}$ & $DQ_{A_3} = DQ_{D_3}$ & $S_4$ \\ \cline{2-4}
 & $Q_{12,8}$ & $Q_{D_4}$ & $((C_2 \times C_2 \times C_2 \times C_2) : C_3) : C_2$ \\ \cline{2-4}
 & $Q_{12,9}$ & $Q^r_{D_4}$ & $((C_2 \times C_2 \times C_2 \times C_2) : C_3) : C_2$ \\ \hline
15  & $Q_{15,2}$ & $Q_{H_3}$ & $S_6$ \\ \cline{2-4}
  & $Q_{15,7}$ & $Q_{A_5}$ & $S_6$ \\ \hline
20 & $Q_{20,3}$ & $DQ_{A_4}$ & $S_5$ \\ \cline{2-4}
  & $Q_{20,9}$ & $Q_{D_5}$  & $(C_2 \times C_2 \times C_2 \times C_2) : S_5$ \\ \cline{2-4}
  & $Q_{20,10}$ & $Q^r_{D_5}$ & $(C_2 \times C_2 \times C_2 \times C_2) : S_5$ \\ \hline
21 & $Q_{21,9}$ & $Q_{A_6}$ & $S_7$ \\ \hline
24 & $Q_{24,17}$ & $DQ_{D_4}$ & $(((C_2 \times C_2 \times C_2) : (C_2 \times C_2)) : C_3) : C_2$ \\ \hline
28 & $Q_{28,13}$ & $Q_{A_7}$ & $S_8$ \\ \hline
30 & $Q_{30,1}$ & $DQ_{H_3}$ & $A_5$ \\ \cline{2-4}
  & $Q_{30,16}$ & $DQ_{A_5}$ & $S_6$\\ \cline{2-4}
  & $Q_{30,23}$ & $Q^r_{D_6}$ & $((C_2 \times C_2 \times C_2 \times C_2) : A_6) : C_2$ \\ \cline{2-4}
  & $Q_{30,24}$ & $Q_{D_6}$ & $(C_2 \times C_2 \times C_2 \times C_2) : S_6$\\ \hline
36 & $Q_{36,72}$ & $Q_{E_6}$ & $\mathrm{O}(5,3) : C_2$\\ \cline{2-4}
 & $Q_{36,73}$ & $Q_{A_8}$ & $S_9$ \\ \hline
40 & $Q_{40,12}$ & $DQ_{D_5}$ & $(C_2 \times C_2 \times C_2 \times C_2) : S_5$ \\ \hline
42 & $Q_{42,21}$ & $DQ_{A_6}$ & $S_7$ \\ \cline{2-4}
 & $Q_{42,22}$ & $Q_{D_7}$ & $(C_2 \times C_2 \times C_2 \times C_2 \times C_2 \times C_2) : S_7$ \\ \cline{2-4}
 & $Q_{42,23}$ & $Q^r_{D_7}$ & $(C_2 \times C_2 \times C_2 \times C_2 \times C_2 \times C_2) : S_7$ \\ \hline
45 & $Q_{45,45}$ & $Q_{A_9}$ & $S_{10}$ \\
\end{tabular}
\label{table:small_quandles}
\end{table}

\begin{table}
\caption{Inner automorphism groups of $Q_W$, $DQ_W$, $\Phi_W$ for exceptional finite Coxeter groups.}
\begin{tabular}{l|l|l|l}
description & order & $|\mathrm{Inn} (Q)|$ & $\mathrm{Inn}(Q)$ (GAP's structure description) \\
\Xhline{1pt}
$Q_{E_6}$ & 36 & 51840 & $\mathrm{O}(5,3) : C_2$ \\ \hline
$DQ_{E_6}$ & 72 & 51840 & $\mathrm{O}(5,3) : C_2$ \\ \hline
$\Phi_{E_6}$ & 72 & 51840 & $\mathrm{O}(5,3) : C_2$ \\ \Xhline{1pt}
$Q_{E_7}$ & 63 & 1451520 & $\mathrm{O}(7,2)$ \\ \hline
$DQ_{E_7}$ & 126 & 1451520 & $\mathrm{O}(7,2)$ \\ \hline
$\Phi_{E_7}$ & 126 & 2903040 & $C_2 \times \mathrm{O(7,2)}$ \\ \Xhline{1pt}
$Q_{E_8}$ & 120 & 348364800 & $\mathrm{O}^+(8,2) : C_2$ \\ \hline
$DQ_{E_8}$ & 240 & 696729600 & $(C_2 . \mathrm{O}^+(8,2)) : C_2$ \\ \hline
$\Phi_{E_8}$ & 240 & 696729600 & $(C_2 . \mathrm{O}^+(8,2)) : C_2$ \\ \Xhline{1pt}
$Q_{F_4}$ & 24 & 576 & $((A_4 \times A_4) : C_2) : C_2$ \\ \hline
$DQ_{F_4}$ & 48 & 1152 & $((((C_2 \times C_2 \times C_2) : (C_2 \times C_2)) : (C_3 \times C_3)) : C_2) : C_2$ \\ \hline
$\Phi_{F_4}$ & 48 & 1152 & $((((C_2 \times C_2 \times C_2) : (C_2 \times C_2)) : (C_3 \times C_3)) : C_2) : C_2$ \\ \Xhline{1pt}
$Q_{H_3}$ & 15 & 60 & $A_5$ \\ \hline
$DQ_{H_3}$ & 30 & 60 & $A_5$ \\ \hline
$\Phi_{H_3}$ & 30 & 120 & $C_2 \times A_5$ \\ \Xhline{1pt}
$Q_{H_4}$ & 60 & 7200 & $(A_5 \times A_5) : C_2$ \\ \hline
$DQ_{H_4}$ & 120 & 14400 & $(\mathrm{SL}(2,5) : A_5) : C_2$ \\ \hline
$\Phi_{H_4}$ & 120 & 14400 & $(\mathrm{SL}(2,5) : A_5) : C_2$ \\ 
\end{tabular}
\label{table:exceptional}
\end{table}

\end{document}